\newtheorem{theorem}{Theorem}[section]
\newtheorem{proposition}[theorem]{Proposition}
\newtheorem{definition}[theorem]{Definition}
\newtheorem{corollary}[theorem]{Corollary}
\newtheorem{lemma}[theorem]{Lemma}
\theoremstyle{remark}
\newtheorem{remark}[theorem]{Remark}
\numberwithin{equation}{section}
\numberwithin{figure}{section}
\numberwithin{table}{section}
\DeclareMathOperator{\Bun}{\mathrm{Bun}}
\DeclareMathOperator{\Cht}{\mathrm{Cht}}
\DeclareMathOperator{\ev}{\mathrm{ev}}
\DeclareMathOperator{\FinS}{\mathrm{FinS}}
\DeclareMathOperator{\Frob}{\mathrm{Frob}}
\DeclareMathOperator{\Gal}{\mathrm{Gal}}
\DeclareMathOperator{\GL}{\mathop{GL}}
\DeclareMathOperator{\Gr}{\mathrm{Gr}}
\DeclareMathOperator{\Hecke}{\mathrm{Hecke}}
\DeclareMathOperator{\Perv}{\mathrm{Perv}}
\DeclareMathOperator{\Reg}{\mathrm{Reg}}
\DeclareMathOperator{\Rep}{\mathrm{Rep}}
\DeclareMathOperator{\Std}{\mathrm{Std}}
\DeclareMathOperator{\Weil}{\mathrm{Weil}}
\title{Unipotent nearby cycles and the cohomology of shtukas}
\author{Andrew Salmon}
\email{asalmon@mit.edu}
\address{Department of Mathematics\\
Massachusetts Institute of Technology\\
Cambridge, MA 02139\\ USA}
\date{September 28, 2021} %
\begin{document}

\begin{abstract}
    We give cases in which nearby cycles commutes with pushforward from sheaves on the moduli stack of shtukas to a product of curves over a finite field.  The proof systematically uses the property that taking nearby cycles of Satake sheaves on the Beilinson-Drinfeld Grassmannian with parahoric reduction is a central functor together with a ``Zorro's lemma'' argument similar to that of Xue \cite{xue2020smoothness}.  As an application, for automorphic forms at the parahoric level, we characterize the image of tame inertia under the Langlands correspondence in terms of two-sided cells.
\end{abstract}


\maketitle


\section{Introduction}

Shtukas first appeared in the work of V.~Drinfeld to tackle problems related to the Langlands correspondence for function fields, including the global Langlands correspondence for $\GL_2$ and the Ramanujan-Petersson conjecture \cite{drinfeld1988proof}.  Shtukas function realize a function field Langlands correspondence in their cohomology much in the same way that elliptic modules and elliptic sheaves do.  Shtukas have advantages over elliptic sheaves in that unlike elliptic sheaves, shtukas impose no conditions at a fixed place $\infty$ of the function field $K$ and thus can directly access, for example, cuspidal automorphic forms which are unramified everywhere.  Some of the early applications of shtukas realize the Langlands correspondence in their cohomology by applying Arthur's trace formula.  This strategy features extensively in the use of shtukas by L.~Lafforgue, for example, to prove the global Langlands correspondence for $\GL_n$ \cite{lafforgue2002chtoucas}.  The early moduli stacks of shtukas used by Drinfeld and Lafforgue were generalized by Y.~Varshavsky to allow for any reductive group $G$ and arbitrary modification type by a tuple of representations of the Langlands dual group \cite{varshavsky2004moduli}.  In Varshavsky's generalization, a shtuka is a principal $G$-bundle over a curve with modifications at `legs', which are $S$-points of the base curve, and an identification between the $G$-bundle after modification and the Frobenius twist of the $G$-bundle before modification.  V.~Lafforgue used relationships between these generalized moduli stacks, especially relationships involving creating, annihilating, and fusing different legs of the shtukas in order to give an automorphic to Galois direction of the Langlands correspondence which avoids the use of the trace formula entirely \cite{lafforgue2018chtoucas}.

V.~Lafforgue's automorphic to Galois direction of the Langlands correspondence gives, for $G$ a split reductive group over a function field $K$, an action of a commutative algebra $\mathcal{B}$ of excursion operators on the space of cuspidal automorphic forms fixed under an open compact subgroup of the adeles with finite central character.  Since the spectrum of the excursion algebra corresponds to semisimplifications of Weil group representations, this associates a Langlands parameter to cuspidal automorphic forms in the spectral decomposition of the space of automorphic forms as a module over $\mathcal{B}$.  Excursion operators commute with the action of the Hecke algebra.  If $Q$ is an open compact subgroup of the adele group $G(\mathbb{A}_K)$ corresponding to a level structure $N$, Lafforgue's work can be viewed as constructing a map
\[
\begin{tikzcd}
\left\{ \begin{gathered}\mbox{cuspidal automorphic} \\ \mbox{representations of $G$} \\
\mbox{$\pi^Q \subset C^\infty_c(\Bun_{G,N}(\mathbb{F}_q), \overline{\mathbb{Q}_{\ell}})$} \end{gathered} \right\} \arrow[r] & \left\{ \begin{gathered} \widehat{G}(\overline{\mathbb{Q}_{\ell}})\mbox{-valued representations} \\ {of} \Weil(\overline{K} / K). \end{gathered} \right\}
\end{tikzcd}
\]
A cuspidal automorphic representation contains a nonzero vector $v \in \pi^Q$ fixed under an open compact subgroup $Q \subset G(\mathbb{A}_K)$.  Given such a vector for open compact subgroup, we expect that the corresponding $\Weil(\overline{K} / K)$ representation is compatible with the level structure $Q$.  Specifically, under V.~Lafforgue's map, we would like to control the ramifications of the Galois representations in terms of the level structures of the automorphic representations.  Our main theorem is a result of this type for parahoric level structure.
\begin{theorem}
Let $\pi$ be a cuspidal automorphic representation with finite order central character for the split group $G(\mathbb{A}_K)$ where $K$ is the function field of the smooth connected curve $C$.  Suppose $\pi$ contains a nonzero vector $v \in \pi^Q$ fixed under a group
\begin{equation} Q = \prod_{x \in C} Q_x \subset {\prod_{x \in C}}' G(K_x) = G(\mathbb{A}_K) \end{equation}
where $Q_p$ is a standard parahoric subgroup for fixed $p \in C$.  Let $I^1_{K_p} \subset I_{K_p} \subset \Weil(\overline{K_p} / K_p) \subset \Weil(\overline{K} / K)$ be the wild inertia, inertia, and local Weil group at the place $p$.
\begin{enumerate}
    \item (Special case of Theorem~\ref{thm:unipotent}) Let $\rho$ be the $\Weil(\overline{K} / K)$ representation corresponding to $\pi$ under V. Lafforgue's construction.  Then $\rho(I^1_{K_p}) = 1$ and for $\gamma \in I_{K_p} / I^1_{K_p}$ a topological generator, $\rho(\gamma) \subset \widehat{G}$ is unipotent.
    \item (Theorem~\ref{thm:twosided}) Keeping the above assumptions, if $Q_p$ is a standard parahoric containing a fixed Iwahori $I$, and if $w_P$ is the longest element of the Weyl group of the Levi $Q_p / Q_p^+$ where $Q_p^+$ denotes the pro-unipotent radical of $Q_p$, and $\overline{u_P}$ is the unipotent orbit closure corresponding to the two-sided cell containing $w_P$, then $\rho(\gamma) \in \overline{u_P}$.
\end{enumerate}
\end{theorem}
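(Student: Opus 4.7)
The plan is to transfer the statement about the local Galois representation $\rho$ at the place $p$ into a statement about local monodromy on the cohomology of the moduli stack of shtukas, and then to compute that monodromy geometrically using nearby cycles of Satake sheaves on the Beilinson--Drinfeld Grassmannian with parahoric reduction at $p$.

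For the first reduction, I would realize $v \in \pi^Q$ as a class in the cohomology of the moduli stack $\Cht_G$ of $G$-shtukas with level structure adapted to $Q$, where at $p$ the shtuka carries a $Q_p$-reduction rather than a full level structure. By V.~Lafforgue's construction, the excursion operators acting on this cohomology determine $\rho$, so the local behaviour of $\rho$ at $p$ is controlled by the monodromy of the nearby cycles of the pushforward sheaf on the base curve as the leg at $p$ specializes. The crucial technical ingredient, promised in the abstract, is that nearby cycles commute with this pushforward from $\Cht_G$; granting this, the problem reduces to analyzing nearby cycles on the Beilinson--Drinfeld Grassmannian with parahoric reduction.

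For part (1), I would invoke Gaitsgory's theorem in its parahoric form: nearby cycles of a Satake sheaf takes values in the center of the Hecke category on the parahoric affine flag variety and carries a pro-unipotent monodromy. Pro-unipotence transports through a Zorro's lemma argument in the style of Xue to give pro-unipotence of $\rho(\gamma)$, while triviality on the wild inertia follows from tameness of the nearby cycles (the Satake sheaves being smooth over the open stratum of the base curve). For part (2), I would combine this with Lusztig's bijection between two-sided cells of the affine Weyl group and unipotent orbits in $\widehat{G}$: the monodromy filtration of the parahoric nearby cycles of the constant sheaf on the $Q_p$-orbit is supported on Schubert cells in the two-sided cell of $w_P$, and Lusztig's bijection sends this cell to $\overline{u_P}$. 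Combined with part (1), this forces $\rho(\gamma) \in \overline{u_P}$.

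The main obstacle will be establishing the commutation of nearby cycles with the pushforward from $\Cht_G$: unlike in the proper case, the shtuka stack is only locally of finite type and the leg map is not smooth, so one cannot appeal to proper base change directly. I expect this step to require a careful Zorro-style argument exploiting partial properness of Harder--Narasimhan truncations together with the central functor property of parahoric nearby cycles; this is the technical heart of the paper, and once it is in place, both parts of the theorem follow by combining it with known results about Gaitsgory's nearby cycles and Lusztig's cells.
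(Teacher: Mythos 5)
Your overall architecture matches the paper's: realize the automorphic form in the cohomology of $\Cht_G$ at parahoric level, prove that nearby cycles at $p$ commute with $R\mathfrak{p}_!$ via a Zorro's-lemma/fusion argument (this is indeed the technical heart, Theorem~\ref{thm:commutingnearbycycles}), and then read off the local monodromy from nearby cycles of Satake sheaves on the local model, using Gaitsgory--Zhu centrality and unipotence for part (1) and Bezrukavnikov--Lusztig for part (2). For part (1) your sketch is essentially the paper's argument, with one imprecision worth flagging: the passage from ``tame inertia acts unipotently on cohomology'' to ``$\rho(\gamma)$ is unipotent'' is not a Zorro's-lemma step but an excursion-operator step --- one uses the compatibility $\nu(S_{I,f,(\gamma_i)}) = f((\rho(\gamma_i)))$ to show that the image of $\gamma$ in $\widehat{G}/\!/\widehat{G}$ is the identity, since the operators $S_{\{1,2\},V,(\gamma,1)}-1$ act nilpotently for all $V$.

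The genuine gap is in part (2). Knowing that the monodromy of parahoric nearby cycles on the affine flag variety is ``supported on'' the two-sided cell of $w_P$ does not by itself force $\rho(\gamma)\in\overline{u_P}$, because $\rho$ is only defined through characters of the excursion algebra; to constrain $\rho(\gamma)$ to a closed subvariety of $\widehat{G}$ you must produce actual relations in the (framed) excursion algebra, one for each regular function $f$ vanishing on $\overline{u_P}$. The paper does this by writing $f$ as a matrix coefficient of some $V$, defining an endomorphism $F$ of $\Psi\mathcal{S}_{\{0\},\Reg\otimes\underline{V}}$ from a fixed $N_P\in u_P$, and using Bezrukavnikov's equivalence $\mathcal{A}_{w_P}\cong\Rep(H_{w_P})$ to identify $\theta\circ F$ with the actual monodromy endomorphism $(1\bullet\mathfrak{M}_V)\circ\theta$; the vanishing of the resulting composition $\xi\circ F\circ x$ on $\Gr_G$ is then transported to $\Cht_G$ by the local model, pushed forward using Theorem~\ref{thm:commutingnearbycycles}, and read as the relation $F_{f,\gamma}=0$ in the framed excursion algebra of Lafforgue--Zhu. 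Without this mechanism (or some substitute converting a statement about monodromy of sheaves into constraints on the Langlands parameter), the final sentence of your part (2) --- ``this forces $\rho(\gamma)\in\overline{u_P}$'' --- is an unjustified leap.
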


In the ``spherical'' case when $Q_p$ is $G(O_p)$ for $O_p$ the ring of integers in $K_p$, it is known that $\rho(I_{K_p}) = 1$.  Recently, C.~Xue gave a proof of this result \cite{xue2020smoothness} by proving a stronger result: smoothness of cohomology sheaves of shtukas with arbitrarily many legs.  Let $C$ be a smooth connected curve over $\mathbb{F}_q$ and $\Cht_{G, N, I, W}$ be the moduli stack of shtukas for the group $G$ with $|I|$ legs with relative position given by an $I$-tuple of representations $W \in \Rep(\widehat{G}^I)$, with level structures given by a divisor $N \subset C$, living over the generic fiber of the product $C^I$.  Xue's result implies that the action of $\Weil(K, \overline{K})^I$ on $IH^*(\Cht_{G, N, I, W}, \overline{\mathbb{Q}_{\ell}})$ factors through $\Weil(C \setminus N, \overline{\eta})^I$.  An important part of Xue's technique uses the fusion structure of the perverse sheaves on $\Cht_{G, I}$ together with Zorro's lemma to construct an inverse to the natural specialization map for the sheaves
\begin{equation} \mathfrak{sp}^* \colon \mathcal{H}^j_{I, W} |_{\overline{s}} \to \mathcal{H}^j_{I, W} |_{\overline{\eta}}. \end{equation}
Such an inverse map is known automatically if one has the existence of a compactification of $\Cht_{G, N, I, W}$ such that the singularities of the compactification are no worse than the singularities of $\Cht_{G, N, I, W}$.  Constructing such a compactification seems to be a challenging problem, and a construction which satisfies the desired properties concerning singularities only seems to be available in the cases $G = \GL_2$, $I = 2$, $W = \Std \boxtimes \Std^*$ \cite{drinfeld1987cohomology} and $G = \GL_n$, $N = \varnothing$, $I = 2$, $W = \Std \boxtimes \Std^*$ \cite{lafforgue1998compactification}.

A key intermediate result is an analogue of Xue's smoothness result for the case of parahoric level structure, using unipotent nearby cycles instead of the specialization map.  At parahoric level structure, we prove that the local Weil group actions factor through tame inertia, and tame inertia acts unipotently.  Analogous to Xue's result, we use the fusion structure and Zorro's lemma in a systematic way to construct an inverse to the natural map (the notation will be explained later)
\begin{equation} \mathrm{can} \colon R\mathfrak{p}_! \Psi_J \mathscr{F}_{I \cup J, W \boxtimes V} \to \Psi_J R\mathfrak{p}_! \mathscr{F}_{I \cup J, V \boxtimes W}. \end{equation}
Once again, a standard fact about nearby cycles is that this canonical map is an isomorphism if $\mathfrak{p}$ is proper, and we use similar techniques to those of Xue in order to avoid the need to construct compactifications.  This fact on nearby cycles commuting with pushforward is suggested as a generalization of Xue's result on specialization maps above, and it also would appear as a consequence of a constellation of conjectures of Zhu \cite[Example~4.8.12]{zhu2020coherent} relating the cohomology of local and global shtukas to coherent sheaves on the stacks of local and global Langlands parameters in the function field case.

To conclude useful results about the cohomology of moduli spaces of shtukas, we need integral models, at least at the parahoric level.  Such integral models and their local models are available in large generality by combining the results of \cite{mayeux2020n} and \cite{rad2016local}.  For this reason, for the purpose of the paper, we do not assume that $G$ is a reductive group, but only that it is a smooth group scheme over a smooth projective curve $C$ over a finite field $\mathbb{F}_q$, and that it has reductive generic fiber.  The moduli stack of shtukas $\Cht_G$ will automatically incorporate the level structures, constructed in general via dilatation.  An overview of this work will occupy section \ref{sec:modulishtukas}.

In section \ref{section:excursion} we are interested in the analogue of Xue's constancy results for \'{e}tale sheaves on the product of the generic fiber of a curve.  Such a result is key for proving smoothness results and relies on the use of the Eichler-Shimura relation.  Indeed, most of the section is dedicated to writing down a general context in which the Eichler-Shimura relation holds.

In place of the fusion structure of the affine Grassmannian, we use a fusion property of nearby cycles, first identified by Gaitsgory in the proof that the nearby cycles functor is a central functor from the spherical Hecke category to the Iwahori Hecke category \cite{gaitsgory2004braiding}.  In section \ref{sec:centralsheaves}, we explain Gaitsgory's construction and subsequent generalizations to prove the desired fusion property on nearby cycles over the moduli stack of shtukas.  This is used in section \ref{sec:nearby} to prove cases where nearby cycles commutes with the pushforward $R\mathfrak{p}_!$.

Finally, in section \ref{sec:Langlands}, we draw conclusions about the Langlands correspondence and prove our main result.  The result of two-sided cells uses the work of Lusztig and Bezrukavnikov to characterize the monodromy of nearby cycles on the Beilinson-Drinfeld Grassmannian at parahoric level, the local model to draw analogous conclusions about nearby cycles on the stacks of shtukas, and the framed Langlands parameters of Lafforgue and Zhu to draw conclusions about the Langlands correspondence and the image of the tame generator.

\subsection*{Acknowledgements}

I would like to thank my advisor, Zhiwei Yun, for introducing me to shtukas and for many discussions and suggestions about this document.  I thank Cong Xue for encouraging me to pursue questions of nearby cycles at the parahoric level and for making many helpful comments.  I thank Xinwen Zhu for explaining the connection to \cite{zhu2020coherent}.  I would also like to thank the referee for pointing out a gap in the previous version of the paper in the proofs of Proposition~\ref{nearby:shtuka} and Lemma~\ref{monoidalsheaves}.

\section{Moduli spaces of parahoric shtukas}\label{sec:modulishtukas}

We introduce some standard notation for shtukas, first developing our maps in the context of a smooth affine group scheme over a curve, then specializing to N\'{e}ron blow-ups of reductive groups to show how these maps allow us to produce perverse sheaves on our stacks of shtukas \cite{mayeux2020n}.  We fix a base field $\mathbb{F}_q$ and a smooth connected projective curve $C$ over that base field.  For $G$ be a smooth affine group scheme over a curve, we can form the stack $\Bun_G$ of $G$-torsors.  We also define the Hecke stack and the Beilinson-Drinfeld Grassmannian.
\begin{definition}
Let $I$ be a fixed finite set and $G$ be a smooth affine group scheme over $C$.  For a scheme $S$ over $\mathbb{F}_q$, the ind-stack $\Hecke_{G,I}(S)$ classifies the data of
\begin{enumerate}
    \item Two bundles $\mathcal{E}_1$ and $\mathcal{E}_2$ in $\Bun_G(S)$,
    \item A collection of points $(x_i)_{i \in I} \in C^I(S)$,
    \item An isomorphism $\tau \colon \mathcal{E}_1|_{C \setminus \bigcup_i \Gamma_{x_i}} \simeq \mathcal{E}_2|_{C \setminus \bigcup_i \Gamma_{x_i}}$ away from the graphs of $x_i$.
\end{enumerate}
\end{definition}

\begin{definition}
The global (Beilinson-Drinfeld) affine Grassmannian is the fiber of the trivial bundle for the map $\Hecke_{G,I} \to \Bun_G$ sending the tuple $((x_i)_{i \in I}, \mathcal{E}_1, \mathcal{E}_2, \tau)$ to $\mathcal{E}_2$.
\end{definition}

We will also need to use certain moduli stacks of iterated affine Grassmannians.

\begin{definition}
The iterated global (Beilinson-Drinfeld) affine Grassmannian $\Gr_{I_1 \cup I_2}^{(I_1, I_2)}$ classifies tuples $((x_i)_{i \in I_1 \cup I_2}, \mathcal{E}, \mathcal{F}, \tau_1, \tau_2)$ where $\tau_1 \colon \mathcal{E} \to \mathcal{F}$ is a Hecke modification along $(x_i)_{i \in I_1}$ and $\tau_2 \colon \mathcal{F} \to G$ is a modification along $(x_i)_{i \in I_2}$, where $G$ here denotes the trivial $G$-bundle.
\end{definition}

By sending $\tau_1$ and $\tau_2$ to their composition, we get a map $c \colon \Gr_{I_1 \cup I_2}^{(I_1, I_2)} \to \Gr_{I_1 \cup I_2}$ that is proper.  Moreover, $\Gr_{I_1 \cup I_2}^{(I_1, I_2)}$ is $\Gr_{I_1} \times \Gr_{I_2}$ locally in the \'{e}tale topology.

Moreover, we can consider the moduli stack of global $G$-shtukas, which we denote $\Cht_{G, I}$.

\begin{definition}
The moduli stack of global $G$-shtukas, $\Cht_{G, I}$, is defined by the pullback square
\begin{equation}
    \begin{tikzcd}
    \Cht_{G, I} \arrow[r] \arrow[d] & \Hecke_{G, I} \arrow[d] \\
    \Bun_G \arrow[r] & \Bun_G \times \Bun_G
    \end{tikzcd}
\end{equation}
Let us explain what the arrows are.  The lower arrow is the graph of Frobenius pullback that on $S$-points sends a bundle $\mathcal{E}$ on $C \times S$ to $(\mathcal{E}, (1 \times \Frob_S)^* \mathcal{E})$ as a pair of $G$-bundles on $C \times S$.  The vertical arrow is the forgetful map that on $S$-points sends a tuple $((x_i)_{i \in I}, \mathcal{E}_1, \mathcal{E}_2, \tau)$ to $(\mathcal{E}_1, \mathcal{E}_2)$.
\end{definition}

We will adopt the convention of denoting $(1 \times \Frob_S)^*\mathcal{E}$ as ${}^{\tau} \mathcal{E}$.

Let us now be more specific about the sorts of smooth affine group schemes that we are talking about.  We consider groups $G$ over our curve $C$ which have generic fiber a quasi-split reductive group $G_K$.  Following Lafforgue \cite[Section~12]{lafforgue2018chtoucas}, there is an open $U \subset C$ such that $G_K$ extends to a smooth affine group scheme over $C$ that is a reductive group scheme over $U$ with parahoric reduction at $C \setminus U$.  Let $R$ be the complement $C \setminus U$.

Next, we can modify the group by dilatation.  Let $N$ be an effective divisor, not necessarily reduced, and let $H \subset G|_N$ be a closed, smooth group subscheme of the restriction of $G$ to the divisor $N$.  The construction of \cite{mayeux2020n} produces a smooth group scheme $G$ over $C$ by dilatation such that $\Cht_{G}$ is an integral model of shtukas for the quasi-split group with level structure according to $H$ along the divisor $N$.  Let us fix some divisor $N$ and let $\widehat{U}$ be $U \setminus N$ so that the complement of $\widehat{U}$ is $\widehat{N} := N \cup R$.  Let $P$ be the subset of $\widehat{N}$ where the dilatated group $G$ has parahoric reduction.

After dilatation, we have produced a smooth affine group scheme $G$ such that $G(\mathbb{A})$ is the adeles of our quasi-split group, which contains an open subgroup $G(\mathbb{O})$ corresponding to the level structures $H$ along $N$.  Let $Z$ be the center of $G$.  To make our moduli spaces of shtukas finite type, we need to quotient by the action of a discrete group $\Xi$, which is a fixed lattice in $Z(\mathbb{A})$ such that $\Xi \cap Z(\mathbb{O}) Z(F) = \{ 1 \}$.  This acts on $\Cht_G$ such that the quotient $\Cht_G / \Xi$ has finitely many components.  Moreover, this action can be made compatible with Harder-Narasimhan truncation, and after truncation, the stacks $\Cht_{G, I, W}^{\le \mu} / \Xi$ are of finite type.

We now want to construct sheaves over these finite type stacks.  We can consider a global version of the positive loop group that we write $G_{I, \infty}$, living over $C^I$.  There is an important map $\epsilon_G \colon \Cht_{G, I} \to [G_{I, \infty} \backslash \Gr_{G, I}]$.  This follows as a straightforward generalization of \cite[Definition~1.1.13]{xue2020cuspidal}.

\begin{definition}
For a divisor $D$, we can view the divisor as a subscheme (with non-reduced structure around points with multiplicity) and pull back the trivial $G$-bundle to $D$.  We let $G_{I, d}$ classify tuples $((x_i)_{i \in I}, g)$ where $x_i \in C(S)$ gives a tuple of points and $g$ gives an automorphism of the trivial bundle on the graph of $\sum dx_i$.

Let $G_{I, \infty} = \varprojlim_d G_{I, d}$.  We have a map
\begin{equation}
\epsilon_G \colon \Cht_{G, I} \to [G_{I, \infty} \backslash \Gr_{G, I}]
\end{equation}
defined by starting with the isomorphism
\begin{equation}
\tau \colon \mathcal{E}|_{C \setminus \bigcup_i \Gamma_{x_i}} \to {}^{\tau} \mathcal{E}|_{C \setminus \bigcup_i \Gamma_{x_i}},
\end{equation}
restricting it around formal neighborhoods of $(x_i)_{i \in I}$, and forgetting the relationship of ${}^{\tau} \mathcal{E}$ and $\mathcal{E}$.  Since every $G$-bundle restricted to formal discs is trivial, $[G_{I, \infty} \backslash \Gr_{G, I}]$ classifies tuples $((x_i)_{i \in I}, \mathcal{E}, \mathcal{F}, \tau)$ where $\mathcal{E}$ and $\mathcal{F}$ are $G_{I, \infty}$-torsors around the points $(x_i)$ and $\tau$ is an isomorphism along the punctured formal disks around $(x_i)$.
\end{definition}

Given a lattice $\Xi$ in $Z(F) \backslash Z(\mathbb{A})$ as above, $\epsilon_G$ can be made $\Xi$-equivariant and yields a map \cite[Equation~1.7]{xue2020cuspidal}
\begin{equation}
\epsilon_G^{\Xi} \colon \Cht_{G, I} / \Xi \to [G^{ad}_{I, \infty} \backslash \Gr_{G, I} ].
\end{equation}

We view $\Cht_{G, I} / \Xi$ and all of its substacks as living over $X^I$ with respect to a structure map $\mathfrak{p}$.  We note that the map $\mathfrak{p}$ often fails to be proper.

\begin{definition}
Let $\mathfrak{p} \colon \Cht_{G, I} / \Xi \to X^I$ send the tuple $((x_i)_{i \in I}, \mathcal{E}, \tau)$ to $(x_i)_{i \in I}$.
\end{definition}

Geometric Satake for ramified groups gives a functor
\begin{equation}
\Rep({}^{L}G^I) \to \Perv_{G_{I, \infty}}(\Gr_{G, I}|_{\widehat{U}^I}),
\end{equation}
and we denote the sheaves produced by $\mathcal{S}_{I, W}$.  If $I_1 \cup I_2 = I$ and $W = V_1 \boxtimes V_2$, then $\mathcal{S}_{I,W} \cong c_! \mathcal{S}_{I_1, V_1} \tilde{\boxtimes} \mathcal{S}_{I_2, V_2}$, where $c$ is the convolution map.  Here, $\mathcal{S}_{I_1, V_1} \tilde{\boxtimes} \mathcal{S}_{I_2, V_2}$ is equivalent in the \'{e}tale topology to the sheaf $\mathcal{S}_{I_1, V_1} \boxtimes \mathcal{S}_{I_2, V_2}$ on $\Gr_{G,I_1} \times \Gr_{G,I_2}$: under an \'{e}tale correspondence relating the iterated affine Grassmannian with the product, the pullbacks of both sheaves are the same.

We can pull back the equivariant sheaves $\mathcal{S}_{I,W}$ along the map $\epsilon^{\Xi}_G$ to sheaves on $\Cht_{G, I}|_{U^I}$.  The details of the pullback and compatibility with $\Xi$ are explained in more detail in \cite[Definition~2.4.7]{xue2020cuspidal}.  We denote these sheaves by $\mathscr{F}_{I, W}$ and define the closed substacks $\Cht_{G, \Xi, I, W}$ as the supports of these sheaves.  Similarly, we denote the supports of the sheaves $\mathcal{S}_{I, W}$ by $\Gr_{G, I, W}$.

We now recall the local models theorem, which is \cite[Theorem~3.2.1]{rad2016local}.  We state for the stacks $\Cht_{G, \Xi, I, W}$ and $\Gr_{G, I, W}$, as the stratification by $W$ forms a bound in the sense of \cite[Definition~3.1.3]{rad2016local}.
\begin{theorem}
For every geometric point $y$ of $\Cht_{G, I, W}$ there is an \'{e}tale neighborhood $U_y$ such that
\begin{equation}
\begin{tikzcd}
\Cht_{G, \Xi, I, W} & U_y \arrow[l] \arrow[r] & \Gr_{G, I, W}
\end{tikzcd}
\end{equation}
is a diagram with both arrows \'etale.
\end{theorem}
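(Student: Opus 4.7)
The plan is to build $U_y$ as an étale slice inside a fiber product that simultaneously trivializes the bundles along the legs. The key geometric input is the local–global morphism $\epsilon_G \colon \Cht_{G, I} \to [G_{I, \infty} \backslash \Gr_{G, I}]$ constructed above, together with the atlas $\pi \colon \Gr_{G, I} \to [G_{I, \infty} \backslash \Gr_{G, I}]$. Because both $\Cht_{G, \Xi, I, W}$ and $\Gr_{G, I, W}$ are finite type substacks, the $G_{I, \infty}$-action factors on them through a truncation $G_{I, d}$ for some $d \gg 0$, and I will work modulo such a truncation throughout, replacing $G_{I, \infty}$ by the smooth finite-type group $G_{I, d}$.

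The first and main step is to show that $\epsilon_G$ is formally smooth. Writing $\Cht_{G, I}$ as the fiber product of $\Hecke_{G, I} \to \Bun_G \times \Bun_G$ with the graph of Frobenius pullback $\Bun_G \to \Bun_G \times \Bun_G$, the question reduces to checking that, once a bundle $\mathcal{E}$ has been trivialized on the formal neighborhood of the legs, the datum of the Frobenius identification $\tau$ imposes no deformation-theoretic obstruction. This is a Lang–Steinberg-type assertion: working with truncated trivializations, the relevant obstruction group is the quotient of a smooth affine group by $\mathrm{id} - \Frob$, which is surjective. The hypothesis that $G$ is smooth (even after the non-reduced dilatation along $N$ from Mayeux) is essential, as it guarantees that $G_{I, d}$ itself is smooth, and hence the Lang map on it has vanishing obstructions.

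Granted smoothness of $\epsilon_G$, form the Cartesian square
\begin{equation}
\begin{tikzcd}
V \arrow[r] \arrow[d] & \Gr_{G, I, W} \arrow[d, "\pi"] \\
\Cht_{G, \Xi, I, W} \arrow[r, "\epsilon_G^{\Xi}"] & {[G^{ad}_{I, \infty} \backslash \Gr_{G, I, W}]}
\end{tikzcd}
\end{equation}
The left vertical arrow is smooth, being a base change of the (truncated) $G^{ad}_{I, d}$-torsor $\pi$; the top arrow is smooth, being the base change of the smooth morphism $\epsilon_G^{\Xi}$. Pick a lift $\widetilde{y} \in V$ of $y$ and select $U_y \hookrightarrow V$ to be an étale slice through $\widetilde{y}$ that is transverse to the fibers of both projections. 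A dimension count then closes the argument: both $\Cht_{G, \Xi, I, W}$ and $\Gr_{G, I, W}$ become smooth of the same relative dimension over the common base $[G^{ad}_{I, \infty} \backslash \Gr_{G, I, W}]$ (fibers of dimension $\dim G^{ad}_{I, d}$ in both cases), so for a slice of the correct codimension the composed maps $U_y \to \Cht_{G, \Xi, I, W}$ and $U_y \to \Gr_{G, I, W}$ are smooth of relative dimension zero, hence étale.

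The hardest point is the smoothness of $\epsilon_G$: one must verify that deformation theory of shtukas behaves well even in the presence of Frobenius twists and of the non-reduced level divisor $\widehat{N}$ along which $G$ is constructed by dilatation. Careful bookkeeping with the truncations $G_{I, d}$ is what ensures the transversality argument above is well posed and lets one pass from the pro-smooth stack $[G_{I, \infty} \backslash \Gr_{G, I}]$ to honest algebraic stacks on the supports bounded by $W$; this is precisely the content of the local models theorem of Rad–Hartl in this level of generality.
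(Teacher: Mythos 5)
The paper does not actually prove this statement from scratch: its proof is a citation to the local models theorem of Arasteh Rad--Hartl, and the only content is checking the hypotheses of that theorem, namely that the closures of the strata indexed by $W$ form a ``bound'' in the required sense (because the Satake sheaves are $G_{I,\infty}$-equivariant, so their supports are stable under the action) and that both arrows of the diagram can be made $\Xi$-equivariant. Your proposal instead reconstructs the proof of the cited theorem, via smoothness of $\epsilon_G$ and a common slice inside the fiber product of $\Cht_{G,\Xi,I,W}$ and $\Gr_{G,I,W}$ over the quotient stack. That is indeed the standard route (essentially Varshavsky's and Arasteh Rad--Hartl's argument), so the architecture is legitimate; but the two steps carrying all the weight are too thin as written, and you omit exactly the verifications the paper's proof consists of (the ``bound'' condition on the $W$-strata and the descent to the quotient by $\Xi$).

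Concretely: (1) The smoothness of $\epsilon_G$ is the entire content of the theorem, and your justification misidentifies the mechanism. Lang--Steinberg surjectivity of $g \mapsto g^{-1}\Frob(g)$ is a statement about $\overline{\mathbb{F}_q}$-points of a connected group; formal smoothness of $\epsilon_G$ is an infinitesimal lifting statement, and the correct input is that the differential of Frobenius vanishes, so a square-zero deformation of $\mathcal{E}$ induces the trivial deformation of ${}^{\tau}\mathcal{E}$ and the Frobenius-twisted gluing problem reduces to the unobstructed deformation theory of a $G$-bundle with full level structure along the legs. ``The obstruction group is the quotient of a smooth affine group by $\mathrm{id}-\Frob$'' is not a correct description of that deformation problem. (2) The existence of a slice simultaneously transverse to the fibers of both projections is asserted rather than proved, and a ``dimension count'' is not available because $\Cht_{G,\Xi,I,W}$ and $\Gr_{G,I,W}$ are singular (comparing their singularities is the point of the theorem). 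What one actually needs is a common complement in the tangent space of the fiber product to the two equal-dimensional fiber tangent spaces (which exists by linear algebra), together with the fact that cutting a smooth morphism by a regular sequence transverse to its fibers yields an \'etale morphism; neither is stated. With these two points repaired your argument would go through, but as written the crucial step is supported by the wrong lemma.
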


\begin{proof}
The statement with $\Xi$ follows by checking that the closures of strata by $W$ form a bound in the sense of \cite[Theorem~3.2.1]{rad2016local}, which is true because the sheaves produced by geometric Satake are equivariant and their supports are stable under the action of $G_{I, \infty}$.  So it suffices to check that the left and right arrow can be made $\Xi$-equivariant.  But the left arrow is a cover coming from adding level structures, and such covers are $\Xi$-equivariant, and the right arrow is a version of $\epsilon_G$.
\end{proof}

The sheaves $\mathcal{S}_{I, W}$ and $\mathscr{F}_{I, W}$ have well-known fusion properties.  In \cite{lafforgue2018chtoucas}, V. Lafforgue constructed the automorphic to Galois direction of the global Langlands correspondence by using the fusion and partial Frobenius structure in the moduli stack of shtukas.  Recently, C. Xue proved that the sheaves $R\mathfrak{p}_! \mathscr{F}_{I, W}$ form an ind-local system, and not just an ind-constructible sheaf, on $U^I$.  Our main result extends these ideas to describe how the local Galois groups act at points $p \in C \setminus U$ when $G$ has parahoric reduction at $p$.  To do this, we will use nearby cycles along the base curves in various directions $i \in I$.  Our notational convention will be to suppress the fact that $\Psi_i = R\Psi_i$ is a derived functor and also suppress similar notation for tensor products $\otimes = \otimes^L$.

\section{Excursion operators on nearby cycles}\label{section:excursion}

Let $\FinS$ be the category of finite sets with set maps as morphisms.  There are certain functors between categories cofibered over $\FinS$ that play an important role in the theory of shtukas.  On a formal level, these categories are essential for running parts of V. Lafforgue's strategy to construct excursion operators, as we will see below.

By sending $I$ to $C^I$, we get a contravariant functor $C^{\bullet}$ from sets to schemes.  For a group $\widehat{G}$ instead of $C$, we get a contravariant functor $\widehat{G}^{\bullet}$ from $\FinS$ to algebraic groups.

\begin{definition}
We consider $\Rep(\widehat{G}^{\bullet})$ into a category cofibered over $\FinS$.  The objects in this category are pairs $(I, W)$ where $I$ is a finite set and $W$ is a direct sum of $I$-tuples of representations of $\widehat{G}$ in finite dimensional $E$-vector spaces.  Morphisms in this category are generated by two types of arrows.  $I$-tuples of homomorphisms of $\widehat{G}$-representations, $W \to V$, give arrows $(I, W) \to (I, V)$.  A morphism of finite sets $\zeta \colon I \to J$ gives a fusion map $(I, W) \to (J, W^{\zeta})$ where $W^{\zeta}$ gives tensor products over the finite fibers of the map $\zeta$ where the empty tensor product yields the trivial representation $1$.

The functor that forgets the representation data and just remembers the finite set gives $\Rep(\widehat{G}^{\bullet}) \to \FinS$.  It is straightforward to show that $\Rep(\widehat{G}^{\bullet})$ is a cofibered category over $\FinS$ and the fusion maps are precisely the cocartesian arrows.
\end{definition}

\begin{definition}
Let us describe another category cofibered over $\FinS$, which we call $D^b_c(X^{\bullet})$ for a variety $X$ over $\mathbb{F}_q$.  Objects in this category are pairs $(I, \mathcal{L})$ of a finite set $I$ and an object in the bounded derived category of constructible sheaves $\mathcal{L}$ in $D^b_c(X^I)$.  Morphisms in this category are generated by morphisms in $D^b_c(X^I)$ for all $I$ together with fusion morphisms which are all of the form $(I, \mathcal{L}) \to (J, \Delta_{\zeta}^* \mathcal{L})$ where $\Delta_{\zeta} \colon X^J \to X^I$ is induced by $\zeta \colon I \to J$.  The fusion morphisms are the cocartesian arrows in the cofibered category $D^b_c(X^{\bullet})$.
\end{definition}

Geometric Satake produces a full subcategory of $\Perv_{G_{\infty, \bullet}}(\Gr_{G, \bullet}|_{C \setminus \widehat{N}})$ that is equivalent to $\Rep(\widehat{G}^{\bullet})$ as a category cofibered over $\FinS$.  The essential image consists of the sheaves $\mathcal{S}_{I, W}$ ranging over finite sets $I$ and $W \in \Rep(\widehat{G}^I)$.  Let us briefly discuss a technical point about shifts, which is that it is our convention that the sheaf $\mathcal{S}_{I, W}[-|I|]$ (and not $\mathcal{S}_{I, W}$ itself) lives in the heart of the perverse $t$-structure in order to ensure that the cocartesian property holds without shift (this is consistent with the convention in \cite{lafforgue2014introduction}, for example).  This fact is the reason why we do not add shifts in our definition of unipotent nearby cycles because we are not actually interested in giving a t-exact functor with respect to the perverse t-structure, but only one up to a shift.

Pulling back under $\epsilon^{\le \mu, \Xi}_G$ produces sheaves $\mathscr{F}_{I, W}$ on $\Cht^{\le \mu}_{G,I}|_{(X \setminus \widehat{N})^I} / \Xi$ as above, compatible under specialization to diagonals and thus a full subcategory of perverse sheaves on $\Cht^{\le \mu}_{G, \bullet}|_{(C \setminus \widehat{N})^{\bullet}} / \Xi$ cofibered over $\FinS$ generated by the sheaves $\mathscr{F}_{I, W}$.  Taking the proper pushfoward $R\mathfrak{p}_! \mathscr{F}_{I, W}$ gives a cocartesian functor
\[ \mathcal{H}^{\le \mu}_{\bullet} \colon \Rep(\widehat{G}^{\bullet}) \to D^b_c((C \setminus \widehat{N})^{\bullet}) \]
between categories cofibered over $\FinS$.  This description includes both the functoriality of $\mathcal{H}$ in the representation $W$ and also the fusion morphisms, which, by the fact that $\mathcal{H}^{\le \mu}$ is cocartesian, must be isomorphisms
\[\mathcal{H}^{\le \mu}_{J, W^{\zeta}} \cong \Delta_{\zeta}^* \mathcal{H}^{\le \mu}_{I, W}.\]

We also note that in addition to this structure described above we have commuting partial Frobenius maps $H^{\le \mu}_{I, W} \to H^{\le \mu + \kappa}_{I, W}$.  It is to such a context that we want to abstract the formation of excursion operators, the Eichler-Shimura relation \cite[Section~7]{lafforgue2018chtoucas}, and the smoothness results of \cite[Section~1]{xue2020smoothness}.  Ultimately, we will apply our results to sheaves of the form
\[ Rp_! \Psi_{j_1} \cdots \Psi_{j_n} \mathscr{F}_{I \cup J, W \boxtimes V} \]
for $J = \{ j_1, \dots, j_n \}$.

Let $\mu$ take values in a commutative monoid $S$ with order such that $0$ is minimal and it is a directed poset.  In the cases we consider, $\mu$ takes values in dominant coroots and corresponds to the Harder-Narasimhan filtration.

\begin{definition}
With the above conventions on $\mu$, an $S$-filtered collection of functors $\mathcal{C}_1 \to \mathcal{C}_2$ is a functor from the poset of $S$ to the category of functors from $\mathcal{C}_1$ to $\mathcal{C}_2$.
\end{definition}

In the cases we consider, $\mathcal{C}_2$ is $D^b_c(X^{\bullet})$, $S$ is the monoid of dominant coroots of our reductive group $G$, and we will suppress the dependence on dominant coroots.  For such filtered collections of functors $\mathscr{G}^{\le \mu}$, $\varinjlim_{\mu \in S} \mathscr{G}^{\le \mu}$ will a functor from $\mathcal{C}_1$ to $\mathrm{Ind}(\mathcal{C}_2)$.  Since the limiting cohomology sheaves as $\mu$ gets arbitrarily large are only ind-constructible sheaves, we need a tool to get control over these limits so we can treat them as if they were constructible.  Such control will be given by the Eichler-Shimura relation.

\begin{definition}
Let $\mathcal{G}^{\le \mu}$ be a filtered collection of cocartesian functors from $\Rep(\widehat{G}^{\bullet}) \to D^b_c(X^{\bullet})$ cofibered over $\FinS$, and let $v$ be a closed point of $X$.  We say that $\mathcal{G}^{\le \mu}$ is filtered with respect to partial Frobenius maps $F_{\bullet}$ if for $\kappa$ sufficiently large only depending on the isomorphism class of $\bigoplus \bigotimes_i W_i \in \Rep(\widehat{G})$ we have maps of sheaves
\begin{equation}
F_{I_0} \colon \Frob_{I_0}^{*} \mathcal{G}^{\le \mu}_{I, W} \to \mathcal{G}^{\le \mu + \kappa}_{I, W}
\end{equation}
that are natural transformations that commute with each other for $i \in I$ and such that $F_{I_0}$ together with the canonical map
\begin{equation}
\mathcal{G}^{\le \mu + \kappa}_{I, W} \to \mathcal{G}^{\le \mu + |I_0|\kappa}_{I, W}
\end{equation}
is $\prod_{i \in I_0} F_{\{ i \}}$.
In particular, they should be compatible with the fusion isomorphisms.  We denote $F_{I_0} = \prod_{i \in I_0} F_{\{ i \}}$.  If $\zeta \colon I \to J$, and $J_0 \subset J$ such that $I_0 = \zeta^{-1}(J_0)$, we want to demand that
\begin{equation}
\begin{tikzcd}
\Delta_{\zeta}^* \mathrm{Fr}_{I_0}^* \mathcal{G}^{\le \mu}_{I, W} \arrow[r, "\Delta_{\zeta}^* F_{I_0}"] \arrow[d, "\mathrm{Fr}_{J_0}^* \mathrm{fus}_{\zeta}"]& \Delta_{\zeta}^* \mathcal{G}^{\le \mu + \kappa}_{I, W} \arrow[d, "\mathrm{fus}_{\zeta}"] \\
\mathrm{Fr}_{J_0}^* \mathcal{G}^{\le \mu}_{J, W^{\zeta}} \arrow[r, "F_{J_0}"]& \mathcal{G}^{\le \mu + \kappa}_{J, W^{\zeta}}.
\end{tikzcd}
\end{equation}

Finally, we demand that in the case $|I|=1$, that $F_I$ factors as the composition of the action of Frobenius
\begin{equation} \Frob^{*} \mathcal{G}^{\le \mu}_{I, W} \to \mathcal{G}^{\le \mu}_{I, W} \end{equation}
and the canonical map
\begin{equation} \mathcal{G}^{\le \mu}_{I, W} \to \mathcal{G}^{\le \mu + \kappa}_{I, W}. \end{equation}
\end{definition}

Now suppose the base $X$ is a dense open subset of a smooth projective curve $C$.  We note that passing to the limit $\varinjlim \mathcal{G}^{\le \mu}$ and taking the stalk at a geometric generic point, our assumptions show that the maps $F_{i}^n$ give rise to an action of
\[ F\Weil(\eta_I, \overline{\eta_I}) \]
which is an extension of the kernel of $\Gal(\overline{F_I} / F_I) \to \widehat{\mathbb{Z}}$ by $\mathbb{Z}^I$ \cite[Remarque~8.18]{lafforgue2018chtoucas}.  By Drinfeld's lemma, modules finitely generated under the action of some algebra commuting with partial Frobenius factor through $\Weil(\eta, \overline{\eta})^I$.  We would like to show that this factorization occurs over the whole $\varinjlim \mathcal{G}^{\le \mu}$.  This is the main result of the section, and the proof consists of applying the proof of \cite[Proposition~1.4.3]{xue2020smoothness}.  We note that everything in the proof goes through as long as we have an analogue of the Eichler-Shimura relation.  The majority of the remaining section will be devoted to establishing this result.

For this to be a useful definition, we will need an example of such a collection other than the proper pushforward $\mathcal{H}^{\le \mu}_{I, W}$ used by Lafforgue.

\begin{proposition}\label{nearby:shtuka}
As always, let $G$ be a smooth group scheme whose generic fiber is reductive, and let $p \in C$ be a closed point of parahoric reduction.  Consider the sheaves $\mathscr{F}^{\le \mu}_{I \cup K, W \boxtimes V}$ on $\Cht^{\le \mu}_{G, \Xi, I \cup K, W \boxtimes V} |_{(C \setminus \widehat{N})^I}$ as constructed above.  Let $K = \{ k_1, \dots, k_n \}$ and consider iterated nearby cycles $\Psi_{k_1} \cdots \Psi_{k_n} \mathscr{F}^{\le \mu}_{I \cup K, W \boxtimes V}$, taking the nearby cycle $\Psi_{k_i}$ with respect to the coordinate $k_i \in K$ along $(C \setminus \widehat{N})^{I \cup K}$ with special point a geometric point above $p$.

Consider the functors $\Rep(\widehat{G}^{\bullet}) \to D^b_c((C \setminus \widehat{N})^{\bullet})$ defined by
\[ R\mathfrak{p}_! \Psi_{k_1} \cdots \Psi_{k_n} \mathscr{F}^{\le \mu}_{I \cup K, W \boxtimes V}. \]
These functors are cocartesian functors between categories cofibered over $\FinS$ and are filtered with respect to partial Frobenius maps $F_{\{i\}}$ for $i \in I$.
\end{proposition}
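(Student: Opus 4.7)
The plan is to verify the two claims of Proposition~\ref{nearby:shtuka} separately: the cocartesian property of the functors in the varying finite set $I$, and the existence of compatible partial Frobenius maps $F_{\{i\}}$ for $i \in I$. Throughout, write $\Psi_K$ for the composition $\Psi_{k_1} \cdots \Psi_{k_n}$.

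For the cocartesian property, given a morphism $\zeta \colon I \to J$ of finite sets, the goal is to exhibit a natural isomorphism
\[
\Delta_\zeta^* R\mathfrak{p}_! \Psi_K \mathscr{F}^{\le \mu}_{I \cup K, W \boxtimes V} \;\cong\; R\mathfrak{p}'_! \Psi_K \mathscr{F}^{\le \mu}_{J \cup K, W^\zeta \boxtimes V}.
\]
The strategy proceeds in three steps. First, use base change for $R\mathfrak{p}_!$, which always commutes with arbitrary pullback, to move $\Delta_\zeta^*$ past the pushforward, replacing it by the stack-level pullback $\widetilde{\Delta_\zeta \times \mathrm{id}_K}^*$ on $\Cht_{G, \Xi, I \cup K}$. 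Second, commute this pullback past $\Psi_K$: this is the key technical step, justified by the observation that $\Delta_\zeta \times \mathrm{id}_K$ only affects the $I$-coordinates, which are transverse to the $K$-coordinates along which nearby cycles are taken. To make this precise, I plan to reduce via the local models theorem to the Beilinson-Drinfeld Grassmannian, where \'{e}tale-locally $\Gr_{G, I \cup K} \simeq \Gr_{G, I} \times \Gr_{G, K}$ and $\mathcal{S}_{I \cup K, W \boxtimes V}$ is the corresponding external tensor product, so that the K\"unneth compatibility of nearby cycles across disjoint directions applies directly. Third, use the cocartesian structure of $\mathscr{F}$ on $\Cht$ in all $I \cup K$ coordinates to identify $\widetilde{\Delta_\zeta \times \mathrm{id}_K}^* \mathscr{F}_{I \cup K, W \boxtimes V}$ with $\mathscr{F}_{J \cup K, W^\zeta \boxtimes V}$, and compose the three isomorphisms.

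For the partial Frobenius compatibility, construct $F_{\{i\}}$ for $i \in I$ from the partial Frobenius morphisms on $\Cht_{G, \Xi, I \cup K}$. These act by Frobenius in the $i$-coordinate and the identity on the $K$-coordinates; in particular they commute with $\Psi_K$ by functoriality of nearby cycles under morphisms preserving the direction of specialization. Applying $R\mathfrak{p}_!$ then produces the desired $F_{\{i\}}$. The remaining compatibility axioms---naturality, commutation with fusion isomorphisms, and the decomposition of absolute Frobenius as partial Frobenius composed with the transition map when $|I| = 1$---propagate from the corresponding properties on the stack level, which are part of the standard setup in V.~Lafforgue's construction. The principal obstacle is the transverse commutation of $\Psi_K$ with the diagonal pullback $\widetilde{\Delta_\zeta \times \mathrm{id}_K}^*$: since $\Delta_\zeta$ is a closed embedding rather than smooth, the ordinary smooth base change theorem for nearby cycles does not apply directly, so careful argumentation via local models and the external tensor product decomposition of Satake sheaves is essential; this is plausibly the gap in the earlier version of the paper that the referee identified.
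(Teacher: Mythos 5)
Your proposal is correct and follows essentially the same route as the paper: both reduce via the local models theorem to the Beilinson--Drinfeld Grassmannian, use the \'{e}tale-local product decomposition of the iterated Grassmannian to separate the $I$-directions from the $K$-directions so that $\Delta_\zeta^*$ and $\Psi_K$ act on disjoint factors, and then invoke the fusion isomorphism $\Delta_\zeta^*\mathcal{S}_{I,W}\cong\mathcal{S}_{J,W^\zeta}$ on the first factor; the partial Frobenius claim is likewise handled in both cases by observing that partial Frobenius commutes with nearby cycles (the paper's justification being that it is a local homeomorphism) and then importing Lafforgue's cohomological correspondence. You also correctly identify the failure of smooth base change for the closed embedding $\Delta_\zeta$ as the point requiring the local-model detour.
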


\begin{proof}
The functoriality with respect to maps $(I, W) \to (I, V)$ for $W \to V$ a morphism in the categories $\Rep(\widehat{G}^I)$ living above the identity $1_I$ follows immediately from the functoriality of $\mathscr{F}^{\le \mu}$, nearby cycles, and proper pushforward.  So to prove that our functor exists and is cocartesian, it suffices to check the fusion property, namely that for $\zeta \colon I \to J$, the cocartesian morphisms $(I, W) \to (J, W^{\zeta})$ are sent to cocartesian morphisms
\[ \Delta_{\zeta}^* R\mathfrak{p}_! \Psi_{k_1} \cdots \Psi_{k_n} \mathscr{F}^{\le \mu}_{I \cup K, W \boxtimes V} \cong R\mathfrak{p}_! \Psi_{k_1} \cdots \Psi_{k_n} \mathscr{F}^{\le \mu}_{J \cup K, W^{\zeta} \boxtimes V}. \]
Note that we always have a canonical map going in one direction
\[ \Delta_{\zeta}^* R\mathfrak{p}_! \Psi_{k_1} \cdots \Psi_{k_n} \mathscr{F}^{\le \mu}_{I \cup K, W \boxtimes V} \to R\mathfrak{p}_! \Psi_{k_1} \cdots \Psi_{k_n} \Delta_{\zeta}^* \mathscr{F}^{\le \mu}_{I \cup K, W \boxtimes V} \cong R\mathfrak{p}_! \Psi_{k_1} \cdots \Psi_{k_n} \mathscr{F}^{\le \mu}_{J \cup K, W^{\zeta} \boxtimes V}. \]
by proper base change.  By the local models theorem, it suffices to construct the corresponding isomorphisms on the affine Grassmannian,
\begin{equation} \Delta_{\zeta}^* \Psi_{k_1} \cdots \Psi_{k_n} \mathcal{S}_{I \cup K, W \boxtimes V} \cong \Psi_{k_1} \cdots \Psi_{k_n} \mathcal{S}_{J \cup K, W^{\zeta} \boxtimes V}. \end{equation}

By writing $\mathcal{S}_{I \cup K, W \boxtimes V} \cong c_! \mathcal{S}_{I, W} \tilde{\boxtimes} \mathcal{S}_{K, V}$, where $c$ is the convolution map, and applying proper base change and the \'{e}tale local model property of the iterated affine Grassmannian, we can reduce to the external product case, where we need to show that
\[ \Delta_{\zeta}^* \mathcal{S}_{I, W} \boxtimes \Psi_{k_1} \cdots \Psi_{k_n} \mathcal{S}_{K, V} \cong \mathcal{S}_{J, W^{\zeta}} \boxtimes \Psi_{k_1} \cdots \Psi_{k_n} \mathcal{S}_{K, V} \]
as sheaves on $\Gr_{G, J} \times \mathrm{Fl}_p$.  This follows by the fusion property on the Beilinson-Drinfeld Grassmannian.

Now we want to show that 
\[ R\mathfrak{p}_! \Psi_{k_1} \cdots \Psi_{k_n} \mathscr{F}^{\le \mu}_{I \cup K, W \boxtimes V} \]
are filtered by partial Frobenius.  This results from the fact that partial Frobenius maps exist in our setting, and they are local homeomorphisms.  Therefore they commute with nearby cycles.  So we get an isomorphism
\begin{equation} \left( \mathrm{Fr}_{I_1}^{(I_1, \dots, I_n, K)} \right)^* \left( \Psi_{k_1} \cdots \Psi_{k_n} \mathscr{F}^{\le \mu + \kappa}_{I \cup K, J \boxtimes W} \right) \cong \Psi_{k_1} \cdots \Psi_{k_n} \mathscr{F}^{\le \mu}_{I \cup K, J \boxtimes W}. \end{equation}

Now, using the above isomorphism, the same cohomological correspondence as in \cite[Section~4.3]{lafforgue2018chtoucas} shows that the sheaves in our setting are filtered by partial Frobenius.
\end{proof}

The application of the above proposition will be to show that these sheaves satisfy an analogue of the Eichler-Shimura relations, which in turn will be used to prove a smoothness property their cohomology sheaves are smooth via a Drinfeld lemma argument.

\begin{definition}[Generalized $S$-excursion operators]
Let $\mathcal{G}^{\le \mu}$ be a collection of cocartesian functors from $\Rep(\widehat{G}^{\bullet}) \to D^b_c(X^{\bullet})$ cofibered over $\FinS$ and filtered with respect to partial Frobenius $F_{\bullet}$.  Let $v$ be a closed point of $X$.  We define the $S$-excursion operator as the composition of creation, $F$ on a created leg, and annihilation.
\begin{align*}
\mathcal{G}^{\le \mu}_{I, W} &\cong \mathcal{G}^{\le \mu}_{I \cup \{ 0 \}, W \boxtimes 1}|_{X^I \times v} \\
&\to \mathcal{G}^{\le \mu}_{I \cup \{ 0 \}, W \boxtimes (V \otimes V^*)}|_{X^I \times v} \\
&\cong \mathcal{G}^{\le \mu}_{I \cup \{ 1,2 \}, W \boxtimes V \boxtimes V^*}|_{X^I \times v \times v} \\
&\to_{F_1} \mathcal{G}^{\le \mu+\kappa}_{I \cup \{ 1,2 \}, W \boxtimes V \boxtimes V^*}|_{X^I \times v \times v} \\
&\cong \mathcal{G}^{\le \mu+\kappa}_{I \cup \{ 0 \}, W \boxtimes (V \otimes V^*)}|_{X^I \times v} \\
&\to \mathcal{G}^{\le \mu+\kappa}_{I \cup \{ 0 \}, W \boxtimes 1}|_{X^I \times v} \\
&\cong \mathcal{G}^{\le \mu+\kappa}_{I, W}
\end{align*}
where $\to_{F_1}$ denotes the action of the operator $F_{1}$ along the leg $1$.  We thus write $S_{V, v}$ for this operator, suppressing the dependence on $\mu$, $I$, and $W$.
\end{definition}

\begin{theorem}[Eichler-Shimura]
Let $\mathcal{G}^{\le \mu}$ be a collection of cocartesian functors from $\Rep(\widehat{G}^{\bullet}) \to D^b_c(X^{\bullet})$ cofibered over $\FinS$ and filtered with respect to partial Frobenius maps $F_{\bullet}$.
Then,
\begin{equation}\label{eq:EichlerShimura}
\sum_{i=0}^{\dim V} (-1)^i F_0^{\deg(v) i} \circ S_{\wedge^{\dim V - i} V, v} = 0
\end{equation}
as a map from $\mathcal{G}^{\le \mu}_{I \cup \{ 0 \}, W \boxtimes V}|_{X^I \times v} \to \mathcal{G}^{\le \mu + \kappa}_{I \cup \{ 0 \}, W \boxtimes V}|_{X^I \times v}$ for sufficiently large $\kappa$ depending on $V$ but not $\mu$.
\end{theorem}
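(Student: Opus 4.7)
The plan is to abstract the proof of the Eichler--Shimura relation from \cite[Section~7]{lafforgue2018chtoucas} to the present categorical setting. The essential point is that Lafforgue's geometric argument uses only the cocartesian structure of the functors, the Frobenius--fusion compatibility, and the special-case axiom in $|I|=1$; all of these are packaged into the definition of a collection filtered by partial Frobenius maps.

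In the first step I would rewrite each term $F_0^{\deg(v) i} \circ S_{\wedge^{\dim V - i}V, v}$ as a single composition of structural morphisms: arrows from $\Rep(\widehat{G}^{\bullet})$ (the unit $1 \to W \otimes W^*$, the counit $W^* \otimes W \to 1$, the inclusions $\wedge^j V \hookrightarrow V^{\otimes j}$, and permutations), cocartesian fusion isomorphisms of the functor $\mathcal{G}^{\le \mu}$, and products of partial Frobenii $F_{\{j\}}$ along newly-created legs supported at $v$. The Frobenius--fusion compatibility square from the definition of ``filtered by partial Frobenius'' is what allows the propagation of partial Frobenius across fusion and defusion, so that $F_0^{\deg(v) i}$ on the original leg $0$ and $F_1$ on the legs created inside $S_{\wedge^{\dim V - i}V, v}$ can be combined into a single partial Frobenius acting on a fused representation supported at $v$.

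In the second step I would recognize the resulting alternating sum as the image under $\mathcal{G}^{\le \mu}$ of a single morphism in $\Rep(\widehat{G}^{\bullet})$ that encodes the Cayley--Hamilton identity
\[ \sum_{i=0}^{\dim V} (-1)^i \, \mathrm{tr}(\wedge^{\dim V - i} g) \, g^i = 0 \qquad \text{for all } g \in \GL(V). \]
Because $\Rep(\widehat{G})$ is semisimple and morphisms are determined by their evaluations at geometric points of $\widehat{G}$, the universal morphism built from the left-hand side above is zero in $\Rep(\widehat{G})$. Applying the cocartesian functor $\mathcal{G}^{\le \mu}$ sends this zero morphism to the zero operator, yielding (\ref{eq:EichlerShimura}).

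The main obstacle is step one: the combinatorial matching of $F_0^{\deg(v) i} \circ S_{\wedge^{\dim V - i}V, v}$ with a pure tensor-power morphism over $\Rep(\widehat{G}^{\bullet})$. The bookkeeping must repeatedly invoke the Frobenius--fusion compatibility and the axiom that in the case $|I|=1$, $F_I$ factors as the geometric Frobenius action composed with the canonical transition. Once this reorganization is in place and $\kappa$ is chosen sufficiently large (depending only on $V$, as required) for all intermediate partial Frobenii to be defined, the Cayley--Hamilton identity in $\Rep(\widehat{G})$ transports through $\mathcal{G}^{\le \mu}$ and gives (\ref{eq:EichlerShimura}) directly.
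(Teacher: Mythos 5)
Your overall strategy is the right one in outline---it is Lafforgue's argument, which the paper follows---but the logical core of your step two is wrong as stated, and this is a genuine gap rather than a stylistic difference. The alternating sum $\sum_i (-1)^i F_0^{\deg(v)i}\circ S_{\wedge^{\dim V-i}V,v}$ cannot be the image under $\mathcal{G}^{\le\mu}$ of any single morphism in $\Rep(\widehat{G}^{\bullet})$, zero or otherwise, because the partial Frobenius maps $F_{\bullet}$ are not induced by morphisms of representations: they are extra structure on the filtered collection, not part of the cocartesian functor. If the relation were the image of a zero morphism of representations, it would be a formal triviality carrying no information about Frobenius, whereas the whole point of Eichler--Shimura is that it is a nontrivial constraint on Frobenius (ultimately, that $\rho(\mathrm{Frob}_v)$ satisfies a characteristic polynomial). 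Relatedly, your justification that ``morphisms are determined by their evaluations at geometric points of $\widehat{G}$'' does not apply: the variable $g$ in your Cayley--Hamilton identity is played by Frobenius, not by an element of $\widehat{G}$ acting on $V$, so the identity $\sum_i(-1)^i\mathrm{tr}(\wedge^{\dim V-i}g)\,g^i=0$ is not a morphism in $\Rep(\widehat{G})$ at all.

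The correct isolation of the representation-theoretic input is different: one defines an operator $\mathfrak{C}_J(\mathcal{G}^{\le\mu},U)$ built from creations, annihilations, fusion isomorphisms, and partial Frobenii on the \emph{created} legs, which is linear and functorial in a genuine morphism $U\in\mathrm{End}(V^{\otimes\{0\}\cup J})$ of $\widehat{G}$-representations inserted in the middle. The vanishing comes from taking $U$ to be the antisymmetrizer $\mathcal{A}_{\{0\}\cup J}$, which is the zero endomorphism of $V^{\otimes(\dim V+1)}$ by elementary linear algebra (no semisimplicity needed). The real work---which you flag as ``the main obstacle'' but do not resolve---is then to decompose the antisymmetrizer according to the cycle containing $0$ and to identify the resulting pieces with the terms $(-1)^iF_0^{\deg(v)i}\circ S_{\wedge^{\dim V-i}V,v}$. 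The key identity $\mathfrak{C}_{\{1,\dots,i\}}(\mathcal{G}^{\le\mu},(0\cdots i))=F_0^{\deg(v)i}$ is proved by induction using Zorro's lemma: creation of a pair of legs commutes with partial Frobenius, and creation followed by annihilation at overlapping pairs is the identity, which transfers the Frobenius on a created leg to the original leg $0$. Your proposal never invokes Zorro's lemma, and without it there is no way to convert the Frobenii acting on auxiliary legs into powers of $F_0$; the Frobenius--fusion compatibility square alone does not accomplish this.
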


This proof is essentially contained in \cite[Section~7]{lafforgue2018chtoucas} and the proof we give here involves essentially no new ideas.  For the reader's convenience, we review the argument almost verbatim from \cite{lafforgue2018chtoucas}.  As a warm-up, we review Lafforgue's point of view on the Cayley-Hamilton theorem.

Before proceeding to the proof, we first make some definitions.  In what follows, let $J$ be a finite set and $V$ a finite-dimensional vector space over $E$.  For a finite set $I$, let $V^{\otimes I}$ be the tensor power $V \otimes \dots \otimes V$.  If $T \in \mathrm{End}(V)$ and $U \in \mathrm{End}(V^{\otimes \{ 0 \} \cup J})$.

We denote by $\delta_V$ the creation map $1 \to V \otimes V^*$ and $\ev_V$ the annihilation $V \otimes V^* \to 1$.  We define $\mathfrak{C}_J(T, U)$ as the composition
\begin{equation}\label{eq:CJconstruction}
\begin{tikzcd}[column sep=huge]
V \arrow[r, "1 \otimes \delta_V^{\otimes J}"]& V \otimes (V \otimes V^*)^{\otimes J} \arrow[dl] \\
V^{\otimes \{ 0 \} \cup J} \otimes (V^*)^{\otimes J} \arrow[r, "U \otimes 1"]& V^{\otimes \{ 0 \} \cup J} \otimes (V^*)^{\otimes J} \arrow[dl] \\
V \otimes V^{\otimes J} \otimes (V^*)^{\otimes J} \arrow[r, "1 \otimes T^{\otimes J} \otimes 1"]& V \otimes V^{\otimes J} \otimes (V^*)^{\otimes J} \arrow[dl] \\
V \otimes (V \otimes V^*)^{\otimes J} \arrow[r, "1 \otimes \ev_V^{\otimes J}"]& V.
\end{tikzcd}
\end{equation}
At the end we are left with $\mathfrak{C}_J(T, U)$ as an endomorphism of $V$.  $\mathfrak{C}_J$ is linear in the argument $U$.

We apply this in the specific case that the endomorphism $U$ is the antisymmetrizer.  Recall that if $S_I$ denotes the group of permutations of $I$, and $s(\sigma)$ is the sign of a permutation $\sigma \in S_I$, then the antisymmetrizer $\mathcal{A}_I$ is
\[ \mathcal{A}_I = \frac{1}{|I|!} \sum_{\sigma \in S_{I}} s(\sigma) \sigma. \]
The antisymmetrizer naturally lives in the group algebra $\mathbb{Q}[S_I]$ which acts naturally on $V^{\otimes I}$ for $V$ a vector space.  We also refer to the antisymmetrizer $\mathcal{A}_I$ as the image of this element under the map to $\mathrm{End}(V^{\otimes I})$.

Lafforgue shows that
\[ \mathfrak{C}_J(T, \mathcal{A}_{\{ 0 \} \cup J}) = \frac{1}{n+1} \sum_{i=0}^{|J|} (-1)^i \mathrm{Tr}(\wedge^{n-i} V) T^i \]
which can be thought of as a generalization of the Cayley-Hamilton theorem, as the antisymmetrizer is the endomorphism $0$ if $|J| = \dim V$.

The Cayley-Hamilton theorem is a special case of the Eichler-Shimura relation for the functor $\Rep(\mathbb{N}^{\bullet}) \to D^b_c(X^{\bullet})$ defined by the rule that it sends a tuple of vector spaces $(\{1,\dots,n\}, (W_1, \dots, W_n))$ to the constant sheaf over $X^n$ which is an external tensor product $W_1 \boxtimes \cdots \boxtimes W_n$.  Here, partial Frobenius acts trivially as the sheaves are constant, and linear maps $T$ can be represented by the endomorphism $1 \in \mathbb{N}$ in each coordinate.  Under these identifications, the excursion operator becomes related to the trace.  Moreover, the method of proof of the Cayley-Hamilton theorem above will generalize to the general case.  The key subtleties are that we use the fusion isomorphisms to connect each of the lines in Equation \ref{eq:CJconstruction}.

In what follows that the fact that $\mathcal{G}^{\le \mu}$ is cofibered over $\FinS$ implies that we have fusion isomorphisms
\begin{equation}
    \mathrm{fus}_{\zeta} \colon \Delta_{\zeta}^* \mathcal{G}^{\le \mu}_{I, W} \to \mathcal{G}^{\le \mu}_{J, W^{\zeta}}
\end{equation}
with a natural compatibility with $F_{\bullet}$.

The functoriality of $\mathcal{G}^{\le \mu}$ induces creation maps $\mathscr{C}^{\sharp}_V$ and annihilation maps $\mathscr{C}^{\flat}_V$ as
\begin{equation}
    \mathscr{C}^{\sharp}_V \colon \mathcal{G}^{\le \mu}_{I \cup \{ 0 \}, W \boxtimes W'} \to \mathcal{G}^{\le \mu}_{I \cup \{ 0 \}, W \boxtimes (W' \times V \times V^*)}
\end{equation}
\begin{equation}
    \mathscr{C}^{\flat}_V \colon \mathcal{G}^{\le \mu}_{I \cup \{ 0 \}, W \boxtimes (W' \times V \times V^*)} \to \mathcal{G}^{\le \mu}_{I \cup \{ 0 \}, W \boxtimes W'}
\end{equation}
of sheaves over $X^{I \cup \{ 0 \}}$.  For a morphism $u \colon V \to W$ in $\Rep(\widehat{G}^I)$, we denote
\begin{equation}
\mathcal{G}^{\le \mu}(u) \colon \mathcal{G}^{\le \mu}_{I, V} \to \mathcal{G}^{\le \mu}_{I, W}
\end{equation}
the morphism induced by functoriality.  We have now defined the basic ingredients we need for our proof.

\begin{proof}
Let us define the generalization of $\mathfrak{C}_J$ to $G^{\le \mu}$.  We fix $I$ be a finite set and $W \in \Rep(\widehat{G}^I)$.  We also fix an elements $0,1,2 \not \in I \cup J$ and $V \in \Rep(\widehat{G})$.  For a map $U \colon V^{\otimes \{ 0 \} \cup J} \to V^{\otimes \{ 0 \} \cup J}$ of $\widehat{G}$-representations, let us write $\mathfrak{C}_J(G^{\le \mu}, U)$ for the composition of the four maps below.  The maps below are maps of sheaves on $X^I \times \Delta(v)$ where $\Delta(v)$ is the diagonal inclusion of the closed point $v$ in the remaining copies of $X$.
\begin{equation}\label{eq:CJdefinition}
\begin{tikzcd}[column sep=huge]
\mathcal{G}^{\le \mu}_{I \cup \{ 0 \}, W \boxtimes V} \arrow[r, "\mathscr{C}^{\sharp}_{V^{\otimes J}}"]& \mathcal{G}^{\le \mu}_{I \cup \{ 0,1,2 \}, W \boxtimes V \boxtimes V^{\otimes J} \boxtimes (V^*)^{\otimes J}} \\
\mathcal{G}^{\le \mu}_{I \cup \{ 0 \} \cup \{ 2 \}, W \boxtimes V^{\otimes \{ 0 \} \cup J} \boxtimes (V^*)^{\otimes J}} \arrow[r, "\mathcal{G}^{\le \mu}(1 \boxtimes U \boxtimes 1)"]& \mathcal{G}^{\le \mu}_{I \cup \{ 0 \} \cup \{ 2 \}, W \boxtimes V^{\otimes \{ 0 \} \cup J} \boxtimes (V^*)^{\otimes J}} \\
\mathcal{G}^{\le \mu}_{I \cup \{ 0 \} \cup J \cup \{ 2 \}, W \boxtimes V^{\boxtimes \{ 0 \} \cup J} \boxtimes (V^*)^{\otimes J}} \arrow[r, "\prod_{j \in J} F_{\{ j \}}^{\deg(v)}"]& \mathcal{G}^{\le \mu + \kappa}_{I \cup \{ 0 \} \cup J \cup \{ 2 \}, W \boxtimes V^{\otimes \{ 0 \} \cup J} \boxtimes (V^*)^{\otimes J}} \\
\mathcal{G}^{\le \mu + \kappa}_{I \cup \{ 0,1,2 \}, W \boxtimes V \boxtimes V^{\otimes J} \boxtimes (V^*)^{\otimes J}} \arrow[r, "\mathscr{C}^{\flat}_{V^{\otimes J}}"]& \mathcal{G}^{\le \mu + \kappa}_{I \cup \{ 0 \}, W \boxtimes V}.
\end{tikzcd}
\end{equation}
All the lines are connected by fusion isomorphisms, and $\kappa$ is chosen sufficiently large so that the map $\prod_{j \in J} F_{\{ j \}}$ is well defined.

Now consider the identity $\mathfrak{C}_J(\mathcal{G}^{\le \mu}, (|J| + 1) \mathcal{A}_J) = 0$ for $J = \{ 1, \dots, \dim V \}$.  We will express the left hand side as the left hand side in the Eichler-Shimura relation \ref{eq:EichlerShimura}.  For $\sigma \in S_{\{ 0 \} \cup J}$, we denote $\ell(\sigma, 0)$ the length of the longest cycle containing $0$.

We rewrite our equation as
\[ \sum_{i=0}^{\dim V} \mathfrak{C}_J\left(\mathcal{G}^{\le \mu}, \frac{1}{|J|!} \sum_{\ell(\sigma, 0) = i + 1} s(\sigma) \sigma\right) = 0. \]
We note that by symmetry and counting possible permutations,
\[ \mathfrak{C}_J\left(\mathcal{G}^{\le \mu}, \frac{1}{\dim V!} \sum_{\ell(\sigma, 0) = i + 1} s(\sigma) \sigma\right) = \mathfrak{C}_J\left(\mathcal{G}^{\le \mu}, \frac{1}{(\dim V-i)!} \sum_{\sigma \in S^{[i]}_{\{ 0 \} \cup J}} s(\sigma) \sigma\right) \]
where $S^{[i]}_{\{ 0 \} \cup J}$ denotes the set of permutations such that $\sigma(j) = j + 1$ for $0 \le j < i$ and $\sigma(i) = 0$.  The terms $\sigma$ in the sum now all factor as $\sigma = (0 \dots i) \tau$ and the $i$th term is now
\[ \mathfrak{C}_J\left(\mathcal{G}^{\le \mu}, (-1)^i (0 \dots i) \frac{1}{(\dim V-i)!} \sum_{\tau \in S_{\{ i+1, \dots, n \}}} s(\tau) \tau\right) = (-1)^i \mathfrak{C}_J\left(\mathcal{G}^{\le \mu}, (0 \dots i) \mathcal{A}_{\{ i+1, \dots, n \}}\right). \]
$\mathcal{A}_{\{ i+1, \dots, n \}}$ acts by an idempotent on $V^{\otimes \{ i+1, \dots, n \}}$ projecting onto the subspace $\wedge^{\dim V - i} V$.  The cycle $(0 \dots i)$ and $\mathcal{A}_{\{ i + 1, \dots, n \}}$ are disjoint and so the operator $\mathfrak{C}_J(\mathcal{G}^{\le \mu}, (0 \dots i) \mathcal{A}_{\{ i+1, \dots, n \}})$ splits as the composition of $\mathfrak{C}_{\{1, \dots, i\}}(\mathcal{G}^{\le \mu}, (0 \dots i))$ and $S_{\wedge^{\dim V - i} V, v}$.  So it suffices to show that
\[ \mathfrak{C}_{\{1, \dots, i\}}(\mathcal{G}^{\le \mu}, (0 \dots i)) = F_{\{ 0 \}, v}^{\deg(v) i}. \]
This will follow by induction on $i$, where the base case $i = 0$ is trivial.  We can write the left hand operation as creation in pairs $(1, 2) \cdots (2n-1, 2n)$, application of partial Frobenius to legs $0, 2, \dots, 2n-2$, then annihilation in pairs $(0, 1) \cdots (2n-2, 2n-1)$.  The creation of the pair $(2n-1, 2n)$ commutes with all the partial Frobenius, but by Zorro's lemma, the composition of that creation with annihilation at $(2n-2, 2n-1)$ is the identity.  We are left with an extra application of partial Frobenius to the leg $2n-2$ which identifies with the remaining leg $2n$ after applying Zorro's lemma.  This shows the induction step.
\end{proof}

\begin{theorem}
Let $\mathcal{G}^{\le \mu}$ be a collection of functors from $\Rep(\widehat{G}^{\bullet}) \to D^b_c(X^{\bullet})$ cofibered over $\FinS$ and filtered with respect to partial Frobenius maps $F_{\bullet}$.  Suppose also that $X$ is a Zariski dense open subset of $C_{\overline{\mathbb{F}_q}}$, where $C$ is a proper curve over a finite field.  Let $\overline{\eta}$ be a geometric generic point of $X$.
Then $\varinjlim_{\mu} \mathcal{G}^{\le \mu}_{I, W}|_{\overline{\eta}^I}$ is constant as an ind-constructible sheaf.
\end{theorem}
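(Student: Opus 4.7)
The plan is to imitate the argument of \cite[Proposition~1.4.3]{xue2020smoothness} in our more general axiomatic setting, with the Eichler-Shimura relation just proved playing the role of the corresponding identity on the cohomology of shtukas. The core idea is that Eichler-Shimura shows partial Frobenius satisfies a polynomial equation modulo excursion operators, and Drinfeld's lemma then forces the action on the generic stalk to factor through a product of local Weil groups, which is equivalent to constancy.

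First I would repackage the Eichler-Shimura relation as a polynomial identity for partial Frobenius at a single leg. For $V \in \Rep(\widehat{G})$ and a closed point $v$ of $X$, the identity
\[\sum_{i=0}^{\dim V} (-1)^i F_0^{\deg(v) i} \circ S_{\wedge^{\dim V - i} V, v} = 0\]
holds as a map $\mathcal{G}^{\le \mu}_{I \cup \{0\}, W \boxtimes V}|_{X^I \times v} \to \mathcal{G}^{\le \mu + \kappa}_{I \cup \{0\}, W \boxtimes V}|_{X^I \times v}$. Because the excursion operators $S_{\wedge^j V, v}$ commute with each partial Frobenius $F_{\{i\}}$, passing to $\varinjlim_\mu$ yields a polynomial identity satisfied by $F_0^{\deg(v)}$ over the commutative algebra $\mathcal{E}$ of excursion operators at $v$. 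Using the fusion isomorphism to merge the leg $0$ into a chosen leg $i \in I$, one obtains the analogous identity for $F_{\{i\}}^{\deg(v)}$ acting on $\varinjlim_\mu \mathcal{G}^{\le \mu}_{I, W'}|_{X^{I \setminus \{i\}} \times v}$, where $W'$ is $W$ with $V$ tensored into the $i$-th factor.

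Next, I would take stalks at the geometric generic point $\overline{\eta}^I$ and assemble these identities over all $V$ and all closed points $v$. Together they show that the action of the partial-Frobenius Weil group $F\Weil(\eta_I, \overline{\eta_I})$ on $\varinjlim_\mu \mathcal{G}^{\le \mu}_{I, W}|_{\overline{\eta}^I}$ is locally finite over the commuting action of $\mathcal{E}$. This is precisely the finiteness input that Xue exploits in \cite[Proposition~1.4.3]{xue2020smoothness}: her Drinfeld's lemma argument then applies essentially verbatim and shows that the $F\Weil(\eta_I, \overline{\eta_I})$-action factors through its quotient $\Weil(\eta, \overline{\eta})^I$. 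Under the standard dictionary between ind-constructible sheaves on $X^I$ with compatible partial Frobenius structure and ind-representations of $F\Weil(\eta_I, \overline{\eta_I})$ (or equivalently of $\Weil(\eta, \overline{\eta})^I$ after factorization), this factorization is exactly the statement that $\varinjlim_\mu \mathcal{G}^{\le \mu}_{I, W}|_{\overline{\eta}^I}$ is constant as an ind-constructible sheaf.

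The main obstacle, as in Xue's proof, will be the careful bookkeeping needed to pass from the filtration-shifted Eichler-Shimura identity -- which only holds after increasing $\mu$ by $\kappa$ or $\dim V \cdot \kappa$ -- to a genuine polynomial identity after taking the colimit, and to verify that the $\mathcal{E}$-module structure on the colimit has the finite generation properties Drinfeld's lemma requires. In particular, one must check that a given finitely generated sub-$\mathcal{E}$-module of the stalk becomes stable under partial Frobenius after passing to a sufficiently large filtration level, which requires a cofinality argument on the directed poset parameterizing $\mu$ that is compatible with both fusion and the partial Frobenius structure.
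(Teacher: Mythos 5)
Your proposal follows exactly the route the paper takes: the paper's proof is a one-line reference stating that, with the Eichler--Shimura relation in hand, the argument of Xue's Proposition~1.4.3 (Drinfeld's lemma applied to a module locally finite over the excursion algebra and commuting with partial Frobenius) goes through verbatim. Your elaboration of the bookkeeping with $\kappa$ and the colimit is a faithful expansion of what the paper leaves implicit, so the approach is correct and essentially identical.
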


\begin{proof}
Now that we have the Eichler-Shimura relation, the proof of \cite[Proposition~1.4.3]{xue2020smoothness} goes through verbatim.  Since specialization maps are isomorphisms, we are done.
\end{proof}

\section{Unipotent nearby cycles over a power of a curve}\label{sec:centralsheaves}

We begin with some general theory, following Beilinson and Gaitsgory \cite{beilinson1987glue} \cite{gaitsgory2004braiding}.  Let us consider a general setup.  Let $X$ be the spectrum of a Henselian trait or a smooth curve, $I = \{ 1, \dots, n \}$ and $f \colon Y \to X^I$ a stack of finite type, and let $s \in X$ be a special point.  Let $\mathscr{G}$ be a sheaf on $Y$.  We can consider iterated unipotent nearby cycles $\Psi^u_1 \circ \dots \circ \Psi^u_n \mathscr{G}$.  We can also restrict $\mathscr{G}$ to the preimage $f^{-1}(\Delta(X)) =: Y_\Delta$ of the diagonal $\Delta(X) \subset X^I$ and perform unipotent nearby cycles $\Psi^u_{\Delta} (\mathscr{G}|_{\Delta(X)})$ there, which we will abuse notation and denote simply $\Psi^u_{\Delta} \mathscr{G}$ when the base $X^I$ is clear.  That is, $\Psi^u_{\Delta} = \Psi^u i_{Y_\Delta}^*$ where $i_{Y_\Delta} : Y_\Delta \to Y$ is the inclusion.  We can compare these functors with a functor $\Upsilon$ that generalizes Beilinson's construction of unipotent nearby cycles.

Consider first the case when our base is $1$-dimensional, that is $n = 1$.  Beilinson's construction of nearby cycles starts with local systems $\mathscr{L}_m$ on $X \setminus \{ s \}$ such that monodromy acts according to an $m \times m$ nilpotent Jordan block.  We have natural inclusions of local systems $\mathscr{L}_m \to \mathscr{L}_{m'}$ when $m \le m'$.  Let $i$ be the inclusion of $f^{-1}(s)$ and $j$ the inclusion of the complement $f^{-1}(X \setminus \{ s \})$.  One of Beilinson's results is that unipotent nearby cycles can be written as
\begin{equation}
\Psi^u \mathscr{F} \cong \varinjlim_m i^* R j_* \left( \mathscr{F} \otimes f^* \mathscr{L}_m \right).
\end{equation}

To compare the iterated unipotent nearby cycles with the diagonal unipotent nearby cycles, we introduce a functor $\Upsilon(\mathscr{G})$ that maps naturally to both.  In some favorable cases we consider, $\Upsilon(\mathscr{G})$ will be isomorphic to both $\Psi^u_{\Delta}(\mathscr{G})$ and $\Psi^u_1 \cdots \Psi^u_n$.  Let $i$ be the inclusion of $f^{-1}(\Delta(s)) = f^{-1}((s, \dots, s))$ and $j$ the inclusion of the preimage $f^{-1}((X \setminus \{ s \})^I)$.  We define our functor on objects as
\begin{equation}
    \Upsilon(\mathscr{G}) \cong \varinjlim_{m_1, \dots, m_n} i^* R j_* \left( \mathscr{G} \otimes f^* ( \mathscr{L}_{m_1} \boxtimes \cdots \boxtimes \mathscr{L}_{m_n} ) \right).
\end{equation}
These functors are written with shifts in Gaitsgory's proof that nearby cycles gives a central functor \cite{gaitsgory2004braiding}.  With shifts, the functors will send perverse sheaves to perverse sheaves.  For our purposes, it is easier to ignore the shifts, since the Satake sheaves are only perverse up to a shift (otherwise, the cocartesian property would only be true up to a shift).  The following proposition contains all the properties that we will use about the functor $\Upsilon$.
\begin{proposition}\label{prop:omnibusupsilon}
We abbreviate all mentions of unipotent nearby cycles $\Psi^u$ as $\Psi$.
\begin{enumerate}
    \item There are natural maps
    \begin{equation}
    \begin{tikzcd}
        \Psi_1 \cdots \Psi_n \mathscr{G} & \Upsilon \mathscr{G} \arrow[r] \arrow[l] & \Psi_{\Delta} \mathscr{G}|_{f^{-1}(\Delta(X))}.
    \end{tikzcd}
    \end{equation}
    between functors from $D^b(f^{-1}((X \setminus \{ s \})^I),\overline{\mathbb{Q}_\ell}) \to D^b(Y_s,\overline{\mathbb{Q}_\ell})$.
    \item Let $\pi \colon Y \to Y'$ be of finite type over $X^I$.  There is a natural transformation of functors $R \pi_! \Upsilon \to \Upsilon R \pi_!$ such that the following diagram commutes
    \begin{equation}
        \begin{tikzcd}
        R \pi_! \Psi_1 \cdots \Psi_n \arrow[d] & R \pi_! \Upsilon \arrow[l] \arrow[r] \arrow[d] & R \pi_! \Psi_{\Delta} \arrow[d] \\
        \Psi_n \cdots \Psi_n R \pi_! & \Upsilon R \pi_! \arrow[l] \arrow[r] & \Psi_{\Delta} R \pi_!.
        \end{tikzcd}
    \end{equation}
    Moreover, the vertical arrows are isomorphisms if $\pi$ is proper.
    \item Let $g \colon Y \to Y'$ be of finite type over $X^I$.  There is a natural transformation of functors $g^* \Upsilon \to \Upsilon g^*$ such that the following diagram commutes
    \begin{equation}
        \begin{tikzcd}
        g^* \Psi_1 \cdots \Psi_n \arrow[d] & g^* \Upsilon \arrow[l] \arrow[r] \arrow[d] & g^* \Psi_{\Delta} \arrow[d] \\
        \Psi_1 \cdots \Psi_n g^* & \Upsilon g^* \arrow[l] \arrow[r] & \Psi_{\Delta} g^*.
        \end{tikzcd}
    \end{equation}
    Moreover, the vertical arrows are isomorphisms if $g$ is smooth.
    \item (K\"unneth formula) Suppose that $n = 2$ and $Y \cong Y_1 \times Y_2$ and the map to $X \times X$ comes from maps $Y_i \to X$ for $i = 1,2$, and suppose that $\mathscr{G} \cong \mathscr{G}_1 \boxtimes \mathscr{G}_2$.  Then the maps $\Upsilon \to \Psi \circ \Psi$ and $\Upsilon \to \Psi_{\Delta}$ are isomorphisms, and the composition coincides with the isomorphism
    \begin{equation} \Psi_1 \mathscr{G}_1 \otimes \Psi_2 \mathscr{G}_2 \cong \Psi_{\Delta} ( \mathscr{G}_1 \boxtimes_X \mathscr{G}_2 ) \end{equation}
    observed by Gabber (see Illusie \cite[4.3]{illusie1994expose}).
\end{enumerate}
\end{proposition}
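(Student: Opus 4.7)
All four parts follow from direct manipulation of the defining formula $\Upsilon(\mathscr{G}) = \varinjlim_{m_1,\dots,m_n} i^* Rj_*(\mathscr{G} \otimes f^*(\mathscr{L}_{m_1} \boxtimes \cdots \boxtimes \mathscr{L}_{m_n}))$ together with Beilinson's analogous formula for $\Psi^u$. For part (1), the map $\Upsilon \to \Psi^u_\Delta$ is induced by restricting the defining colimit along the closed inclusion $Y_\Delta \hookrightarrow Y$: diagonal pullback on $X^I$ sends $\mathscr{L}_{m_1} \boxtimes \cdots \boxtimes \mathscr{L}_{m_n}$ to $\mathscr{L}_{m_1} \otimes \cdots \otimes \mathscr{L}_{m_n}$, whose cofinality in $\{\mathscr{L}_m\}$ matches the two colimits. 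The map $\Upsilon \to \Psi^u_1 \cdots \Psi^u_n$ is built iteratively: writing each $\Psi^u_i$ by Beilinson's formula and factoring $\mathscr{L}_{m_1} \boxtimes \cdots \boxtimes \mathscr{L}_{m_n} \cong p_1^* \mathscr{L}_{m_1} \otimes \cdots \otimes p_n^* \mathscr{L}_{m_n}$ splits the multi-index colimit compatibly with the stratification of $Y$ by coordinate-wise special fibers.

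For parts (2) and (3), the natural transformations $R\pi_! \Upsilon \to \Upsilon R\pi_!$ and $g^* \Upsilon \to \Upsilon g^*$ are assembled factor-by-factor from the defining formula. Both $R\pi_!$ and $g^*$ commute with $\varinjlim$, commute with $i^*$ by (proper, respectively smooth) base change, and commute with $\otimes f^*(-)$ by the projection formula together with compatibility of pullbacks. The only potentially non-invertible step is the canonical map $R\pi_! Rj_* \to Rj_* R\pi_!$ (respectively $g^* Rj_* \to Rj_* g^*$), which is an isomorphism exactly when $\pi$ is proper (then $R\pi_! = R\pi_*$ commutes with $Rj_*$ as composition of pushforwards) or when $g$ is smooth (smooth base change). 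The commutativity of the three-column diagrams is a direct unpacking of the same base changes, using that $\Psi^u_1 \cdots \Psi^u_n$ and $\Psi^u_\Delta$ each independently commute with proper pushforward and with smooth pullback.

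For part (4), when $Y = Y_1 \times Y_2$ and $\mathscr{G} \cong \mathscr{G}_1 \boxtimes \mathscr{G}_2$, the K\"unneth formula for $Rj_*$ of external tensor products factors the defining colimit as
\begin{equation}
    Rj_*\bigl((\mathscr{G}_1 \otimes f_1^* \mathscr{L}_{m_1}) \boxtimes (\mathscr{G}_2 \otimes f_2^* \mathscr{L}_{m_2})\bigr) \cong Rj_{1,*}(\mathscr{G}_1 \otimes f_1^* \mathscr{L}_{m_1}) \boxtimes Rj_{2,*}(\mathscr{G}_2 \otimes f_2^* \mathscr{L}_{m_2}),
\end{equation}
identifying $\Upsilon(\mathscr{G}) \cong \Psi^u(\mathscr{G}_1) \boxtimes \Psi^u(\mathscr{G}_2) = \Psi^u_1 \Psi^u_2 \mathscr{G}$; the isomorphism $\Upsilon \cong \Psi^u_\Delta(\mathscr{G}_1 \boxtimes_X \mathscr{G}_2)$ is then a reformulation of Gabber's theorem (Illusie \cite{illusie1994expose}), and the composition reproduces the Gabber isomorphism by construction.

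The main technical obstacle is part (1), where the iterated $i^* Rj_*$ computing $\Psi^u_1 \cdots \Psi^u_n$ must be unfolded through the stratification of $Y$ by coordinate-wise special fibers and then reconciled with the single multi-index colimit defining $\Upsilon$. This is where the multiplicativity of Beilinson's local systems and an interchange of iterated filtered colimits are essential, and where one follows the template of Gaitsgory's centrality argument \cite{gaitsgory2004braiding}.
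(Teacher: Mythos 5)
Your proposal is correct and follows essentially the same route as the paper: both construct the two comparison maps directly from the defining colimit via base change maps for $i^*Rj_*$, the multiplicative structure on the Jordan-block local systems (the paper uses the explicit map $\mathscr{L}_n \otimes \mathscr{L}_m \to \mathscr{L}_{\max(n,m)}$ where you invoke cofinality, and an induction $\Upsilon_n \to \Upsilon_{n-1}\Psi_n$ where you describe the coordinate-wise unfolding), deduce (2) and (3) from proper/smooth base change applied to $Rj_*$, and prove (4) by Künneth for $Rj_*$ of external products plus Gabber's theorem.
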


\begin{proof}
The construction of the maps is a straightforward generalization of \cite{gaitsgory2004braiding}.  We consider $\Delta_X \colon X \to X^I$ the diagonal inclusion and $j_{\Delta} \colon f^{-1}(X \setminus \{ s \}) \to Y_\Delta$ where we imagine $X \setminus \{ s \}$ as living in the diagonal of $X^I$.  Let $f_{\Delta}$ be the base change of the structure map $f$ along the diagonal $\Delta(X) \subset X^I$.  The canonical base change map
\[ \Delta_X^* R j_* \to R j_{\Delta(X),*} \Delta_{X \setminus \{ s \}}^* \]
gives a map from
\[ \Delta_X^* R j_* \left( \mathscr{G} \otimes f^* ( \mathscr{L}_{m_1} \boxtimes \cdots \boxtimes \mathscr{L}_{m_n} ) \right) \]
to
\[ R j_{\Delta(X),*} \left( \mathscr{G}|_{\Delta(X \setminus \{ s \})} \otimes f_{\Delta}^* ( \mathscr{L}_{m_1} \otimes \cdots \otimes \mathscr{L}_{m_n} ) \right). \]
Note that we have a map $\mathscr{L}_n \otimes \mathscr{L}_m \to \mathscr{L}_{\max(n,m)}$, which gives a map
\[ R j_{\Delta(X),*} \left( \mathscr{G}|_{\Delta(X \setminus \{ s \})} \otimes f_{\Delta}^* ( \mathscr{L}_{m_1} \otimes \cdots \otimes \mathscr{L}_{m_n} ) \right) \to R j_{\Delta(X),*} \left( \mathscr{G}|_{\Delta(X)} \otimes f^* ( \mathscr{L}_{\max(m_1, \dots, m_n)} ) \right). \]
After composing the two maps, pulling back along the inclusion of $i_{s,\Delta} \colon \{ s \} \to \Delta(X)$, and passing to the limit over $m_1, \dots, m_n$, the left hand side becomes $\Upsilon(\mathscr{G})$ while the right hand side becomes $\Psi_{\Delta}(\mathscr{G})$.

Let $i_k$ be the inclusion $X^{k-1} \times s^{n-k+1} \to X^k \times s^{n-k}$ along the $k$th coordinate, and let $j_k$ be the inclusion $(X \setminus \{ s \})^k \to X^k$ which we distinguish from $j^n \colon (X \setminus \{ s \})^n \to (X \setminus \{ s \})^{n-1} \times X$.  We note first that by unwinding the definition of $\boxtimes$, we have
\[ f^* ( \mathscr{L}_{m_1} \boxtimes \cdots \boxtimes \mathscr{L}_{m_n} ) \cong f^* ( \mathscr{L}_{m_1} \boxtimes \cdots \boxtimes \mathscr{L}_{m_{n-1}} \boxtimes \mathscr{L}_{1}) \otimes f^*(\mathscr{L}_{1} \cdots \boxtimes \mathscr{L}_1 \boxtimes \mathscr{L}_{m_{n}}). \]

Next, we consider the following diagram
\begin{equation}
    \begin{tikzcd}
    (X \setminus \{ s \})^{n-1} \times \{ s \} \arrow[r, "i_n"] \arrow[d, "j_{n-1} \times \{ s \}"] & (X \setminus \{ s \})^{n-1} \times X \arrow[d, "j_{n-1} \times X"] \\
    X^{n-1} \times \{ s \} \arrow[r, "i_n"] & X^n
    \end{tikzcd}
\end{equation}
to produce a map
\begin{equation}
    R(j_{n-1} \times X)_* \to R(j_{n-1} \times X)_* (i_n)_* (i^n)^* \cong (i_n)_* R(j_{n-1} \times \{ x \})_* i_n^*
\end{equation}
which by adjunction gives us $i_n^* R(j_{n-1} \times X)_* \to R(j_{n-1} \times \{ x \}) i_n^*$.  Using these maps, and writing $j$ as the composition of $(j_{n-1} \times X) \circ j^n$ we can now produce a map from
\[ i_n^* R j_{*} \left( \mathscr{G} \otimes f^* ( \mathscr{L}_{m_1} \boxtimes \cdots \boxtimes \mathscr{L}_{m_{n}} ) \right) \]
to
\[ R(j_{n-1} \times \{ s \})_* \left( i_n^* Rj^n_{*} \left( \mathscr{G} \otimes f^* ( \mathscr{L}_1 \boxtimes \cdots \boxtimes \mathscr{L}_1 \boxtimes \mathscr{L}_{m_n} ) \right) \otimes f^* \left( \mathscr{L}_{m_1} \boxtimes \cdots \boxtimes \mathscr{L}_{m_{n-1}} \boxtimes \mathscr{L}_1 \right) \right). \]
By pulling back and passing to the limit, this gives a map
\[ \Upsilon_n(\mathscr{G}) \to \Upsilon_{n-1} \Psi_n(\mathscr{G}) \]
which inductively gives the map we want.

The second and third parts of the proposition follow by proper and smooth base change.  The base change theorems can be used to give maps $R\pi_! Rj_* \to Rj_* R\pi_!$ by adjunction, which is an isomorphism if $\pi$ is a proper map.  Since the other maps in the diagram arise naturally from base change theorems, the diagrams will commute.

Finally, in the product case, we can write
\[ i_{\Delta}^* Rj_* ( \mathscr{G}_1 \boxtimes \mathscr{G}_2 \otimes f^*(\mathscr{L}_{m_1} \boxtimes \mathscr{L}_{m_2}) ) \cong Rj_{\Delta,*} ( \mathscr{G}_1 \otimes f_1^* \mathscr{L}_{m_1} \otimes \mathscr{G}_2 \otimes f_2^* \mathscr{L}_{m_2}) \]
and the result follows by observing that the right hand side is also isomorphic to
\[ Rj_* ( \mathscr{G}_1 \otimes f_1^* \mathscr{L}_{m_1} ) \otimes Rj_* ( \mathscr{G}_2 \otimes f_2^* \mathscr{L}_{m_2} ) \]
where $Rj_*$ is here understood as the inclusion $X \setminus \{ s \} \hookrightarrow X$.
\end{proof}

We can immediately state a corollary by combining Gabber's observation with the main result of the previous section.

\begin{lemma}\label{nearbyDrinfeld}
Let $I$ and $J$ be finite sets, and let $I \to J$ be a map.  Let $\mathcal{G}^{\le \mu}$ be a collection of functors from $\Rep(\widehat{G}^{\bullet}) \to D^b_c(X^{\bullet})$ cofibered over $\FinS$ and filtered with respect to partial Frobenius maps $F_{\bullet}$.  Suppose also that $X$ is a Zariski dense open subset of $C_{\overline{\mathbb{F}_q}}$, where $C$ is a proper curve over a finite field.  Let $\overline{\eta}$ be a geometric generic point of $X$ and let $s$ be a special point of $C$.  Let $\Psi_I$ be iterated nearby cycles along $C^I$ with respect to the localization of $C$ at $s$.  Let $\Delta_{\zeta} \colon X^J \to X^I$ be the natural map, and let $\Psi_J$ be the same iterated nearby cycles.  Then, we have a natural identification
\begin{equation}
    \Psi_{J} \Delta_{\zeta}^* \varinjlim_{\mu} \mathcal{G}^{\le \mu}_{I, W} \cong \Psi_I \varinjlim_{\mu} \mathcal{G}^{\le \mu}_{I, W}
\end{equation}
which is equivariant with respect to the map $\Weil(\eta, \overline{\eta})^J \to \Weil(\eta, \overline{\eta})^I$.
\end{lemma}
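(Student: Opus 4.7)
The plan is to identify both sides of the claimed isomorphism with the common generic stalk of $\varinjlim_\mu \mathcal{G}^{\le \mu}_{I, W}$, equipped with appropriate Weil actions; the required equivariance then follows from the functoriality of fusion.

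First I would invoke the cocartesian property of $\mathcal{G}^{\le \mu}$ over $\FinS$, which furnishes a fusion isomorphism $\mathrm{fus}_\zeta \colon \Delta_\zeta^* \mathcal{G}^{\le \mu}_{I, W} \xrightarrow{\sim} \mathcal{G}^{\le \mu}_{J, W^\zeta}$ natural in $\mu$. Passing to the colimit and applying $\Psi_J$, the left-hand side of the lemma is rewritten as $\Psi_J \varinjlim_\mu \mathcal{G}^{\le \mu}_{J, W^\zeta}$, so it suffices to identify this $\Psi_J$ and the right-hand side $\Psi_I \varinjlim_\mu \mathcal{G}^{\le \mu}_{I, W}$ with the same vector space (the generic stalk) together with matching Weil actions.

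Next, I would apply the previous theorem to each of the collections $\mathcal{G}^{\le \mu}_{I, W}$ and $\mathcal{G}^{\le \mu}_{J, W^\zeta}$: both colimits are constant ind-constructible sheaves when restricted to the geometric generic points of $X^I$ and $X^J$ respectively. Granted this constancy, iterated unipotent nearby cycles ought to recover the generic stalk equipped with its natural Weil group action factoring through $\Weil(\eta, \overline{\eta})^I$, respectively $\Weil(\eta, \overline{\eta})^J$, via Drinfeld's lemma. The mechanism for this is part (4) of Proposition~\ref{prop:omnibusupsilon}: for a sheaf that is (ind-)constant on the generic fiber, Gabber's K\"unneth observation forces the comparison maps $\Upsilon \to \Psi_1 \cdots \Psi_n$ and $\Upsilon \to \Psi_{\Delta}$ of part (1) to be isomorphisms, and $\Upsilon$ of a constant sheaf collapses to its generic stalk.

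The two sides of the lemma are thereby identified with the same generic stalk, related through $\mathrm{fus}_\zeta$; the claimed equivariance for the map $\Weil(\eta, \overline{\eta})^J \to \Weil(\eta, \overline{\eta})^I$ induced by $\zeta$ follows because $\mathrm{fus}_\zeta$ intertwines the partial Frobenius actions along $J$ with those along $I$, and Drinfeld's lemma translates this to Weil group equivariance. The hard part will be to justify carefully that iterated unipotent nearby cycles of an ind-sheaf whose generic fiber is a constant ind-constructible sheaf really collapses to the generic stalk: this requires commuting the colimit $\varinjlim_\mu$ with the colimit $\varinjlim_m$ implicit in Beilinson's construction of $\Psi^u$ via the local systems $\mathscr{L}_m$, and upgrading the K\"unneth observation from its stated external-product form to sheaves that become external products only after restriction to a suitable open and passage to the colimit in $\mu$.
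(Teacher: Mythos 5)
Your proposal is correct and follows exactly the route the paper intends: the paper offers no written proof beyond the remark that the lemma follows ``by combining Gabber's observation with the main result of the previous section,'' and your argument (fusion isomorphism to rewrite the left side, constancy of $\varinjlim_\mu \mathcal{G}^{\le\mu}$ on the geometric generic fiber from the Eichler--Shimura/Drinfeld-lemma theorem, Gabber's K\"unneth formula from Proposition~\ref{prop:omnibusupsilon}(4) to collapse iterated and diagonal nearby cycles to the common generic stalk, and compatibility of $\mathrm{fus}_\zeta$ with partial Frobenius for the Weil-equivariance) is precisely that combination spelled out. The technical point you flag at the end (commuting $\varinjlim_\mu$ with the colimit in Beilinson's construction) is real but routine, since all colimits involved are filtered and the functors are applied termwise.
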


We claim we also have the following map where $\Psi_{\Delta}$ denotes specialization from $\Delta(\overline{\eta})$ to $\Delta(s)$ over $C^I$.
\begin{lemma}\label{nearbycoalescence}
Consider the following cases:
\begin{enumerate}
    \item Nearby cycles is taken with respect to a geometric point $s$ over a closed point $p$ such that $G$ has parahoric reduction at $p$.
    \item Nearby cycles $\Psi$ is \textit{unipotent} nearby cycles and taken with respect to a geometric point $s$ over a point $p$.  $G$ is split on the generic fiber of the completion and has reduction according to the unipotent radical of the Iwahori.
\end{enumerate}

For $I = \{ 1, \dots, n \}$ and $W \cong W_1 \boxtimes \cdots \boxtimes W_n$, we have an isomorphism natural in $W$ and $V$.
\begin{equation}
    \Psi_{\Delta} \mathscr{F}_{\{ \Delta \} \cup J, (\bigotimes_{i \in I} W_i) \boxtimes V} \cong \Upsilon \mathscr{F}_{I \cup J, W \boxtimes V} \cong \Psi_1 \dots \Psi_n \mathscr{F}_{I \cup J, W \boxtimes V}.
\end{equation}
Both sides of the equation are sheaves over $\mathfrak{p}^{-1}(s \times (C \setminus \widehat{N})^J)$ for which suitable Schubert cells in $(\mathrm{Fl}_p)_{\overline{\mathbb{F}_q}} \times_{\overline{\mathbb{F}_q}} \Gr_{G,J}|_{(C \setminus \widehat{N})^J}$ are local models.
\end{lemma}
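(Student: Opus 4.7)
The plan is to transport the statement from shtukas to the Beilinson-Drinfeld Grassmannian via the local models theorem, then use the factorization structure of the BD Grassmannian to separate the legs indexed by $I$ (which coalesce at $p$) from the legs indexed by $J$ (which remain at generic position in $(C \setminus \widehat{N})^J$), and finally invoke Gaitsgory's centrality theorem for the residual statement about Satake sheaves on the local BD Grassmannian at $p$.

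First, the local models theorem of Section~\ref{sec:modulishtukas}, combined with part (3) of Proposition~\ref{prop:omnibusupsilon} applied to the two \'etale arrows of the local model diagram, reduces the lemma from the sheaves $\mathscr{F}^{\le \mu}_{I \cup J, W \boxtimes V}$ on $\Cht_{G, \Xi, I \cup J, W \boxtimes V}$ to the analogous statement for the Satake sheaves $\mathcal{S}_{I \cup J, W \boxtimes V}$ on $\Gr_{G, I \cup J}$. Next, near a geometric point lying above $(p, \dots, p) \in C^I$ paired with a point in $(C \setminus \widehat{N})^J$, the factorization property of the BD Grassmannian supplies an \'etale-local identification
\begin{equation}
\Gr_{G, I \cup J}|_{U_p^I \times U^J} \;\cong\; \Gr_{G, I}|_{U_p^I} \times \Gr_{G, J}|_{U^J},
\end{equation}
where $U_p$ is an \'etale neighborhood of $p$ and $U = C \setminus \widehat{N}$, and under this identification $\mathcal{S}_{I \cup J, W \boxtimes V} \cong \mathcal{S}_{I, W} \boxtimes \mathcal{S}_{J, V}$. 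All three functors $\Psi_1, \dots, \Psi_n$, $\Upsilon$, and $\Psi_{\Delta}$ in the lemma act only along the $I$-coordinates, and the $J$-coordinates never degenerate, so smooth base change together with the binary K\"unneth formula of Proposition~\ref{prop:omnibusupsilon}(4) lets us peel off the $\mathcal{S}_{J, V}$ factor. This reduces the lemma to the purely local statement on $\Gr_{G, I}|_{U_p^I}$:
\begin{equation}
\Psi_{\Delta} \mathcal{S}_{\{\Delta\}, \bigotimes_{i \in I} W_i} \;\cong\; \Upsilon \mathcal{S}_{I, W} \;\cong\; \Psi_1 \cdots \Psi_n \mathcal{S}_{I, W}.
\end{equation}

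This last statement is exactly Gaitsgory's theorem that nearby cycles of Satake sheaves on the BD Grassmannian form a central functor to the Iwahori/parahoric Hecke category, together with its compatibility with fusion and with the tensor product on the input representations. In case (2), where $G$ is split on the generic fiber and has reduction to the unipotent radical of an Iwahori, this is essentially the content of \cite{gaitsgory2004braiding}, whose $\Upsilon$-functor is designed precisely to mediate the two isomorphisms; restricting to the unipotent-monodromy part is built into the definition of unipotent nearby cycles. In case (1), for general parahoric reduction, one appeals to the extensions of Gaitsgory's construction to the ramified setting (work of Zhu and of Richarz). The main obstacle is thus invoking the appropriate ramified/parahoric version of Gaitsgory's centrality statement: the preceding reduction steps are arranged precisely to isolate this as the single nontrivial input. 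A minor technicality is checking that the natural maps of Proposition~\ref{prop:omnibusupsilon}(1) are transported compatibly under the \'etale-local factorization and the identification of Satake sheaves with external products, which follows from the naturality of the constructions in that proposition.
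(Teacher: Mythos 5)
Your overall strategy coincides with the paper's: reduce from $\Cht$ to $\Gr$ via the local models theorem and the smooth/\'etale base change properties of $\Psi$, $\Psi_\Delta$, $\Upsilon$; strip off the $J$-legs (the paper does this by writing $\mathcal{S}_{I\cup J, W\boxtimes V}\cong c_!\,\mathcal{S}_{I,W}\tilde{\boxtimes}\mathcal{S}_{J,V}$ and using proper base change plus the \'etale-local product structure of the iterated Grassmannian, rather than a direct factorization over disjoint loci, but these are interchangeable here since $p\in\widehat{N}$ keeps the $J$-legs away from $p$); and then reduce to the purely local statement about Satake sheaves near $p$.

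The gap is in your final step, where you declare the residual statement to be "exactly Gaitsgory's theorem." Gaitsgory's result in \cite{gaitsgory2004braiding} is the \emph{binary} case $|I|=2$: it identifies $\Psi_1\Psi_2$ with $\Psi_\Delta$ for two legs via the $\Upsilon$-construction. For general $n$ one must run an induction on $|I|$, and the inductive step is not formal: one has to know that taking $\Psi_0$ of (central sheaf) $\boxtimes$ (Satake sheaf) on an iterated Grassmannian again produces the central sheaf for the tensor product, i.e.
\[ \Psi_0\bigl( Z_{\bigotimes_{i} W_i}\boxtimes \mathcal{S}_{\{0\},U}\bigr)\cong Z_{\bigotimes_i W_i\otimes U}, \]
which is exactly what the paper supplies by rewriting the left side as $\Psi_0\Psi_\Delta$ of a Satake sheaf and applying the binary case again. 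Second, your attribution for case (2) is off in a way that hides a real difficulty: Gaitsgory's argument transfers the K\"unneth computation from the product to the Grassmannian through the \emph{proper} convolution map, and at the level of the pro-unipotent radical of the Iwahori the convolution map is no longer proper. The paper invokes Bezrukavnikov's \cite[Proposition~13]{bezrukavnikov2016two}, which is precisely where this failure of properness is circumvented; citing \cite{gaitsgory2004braiding} for case (2) does not cover this. (Conversely, for case (1) the point is not ramifiedness of the group but parahoric versus Iwahori level, for which Gaitsgory's argument does generalize verbatim.)
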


\begin{proof}
It suffices to check this theorem on sufficiently large open subsets, and the subsets given by Harder-Narasimhan truncation suffice for this purpose.

The local model theorem for shtukas, recalled above, says that for $\mu$ a fixed Harder-Narasimhan truncation of $G$-bundles (and therefore of $G$-shtukas) we can form the following diagram over $(C \setminus N)^{I \cup J}$:
\[ 
\begin{tikzcd}
\Gr_{G, I, W} & U \arrow[l, "u"] \arrow[r, "u'"] & \Cht_{G, \Xi, I, W}^{\le \mu}
\end{tikzcd}
\]
where the arrow on the right is an \'{e}tale \textit{cover}.  Since \'{e}tale maps are a fortiori smooth, we can consider
\begin{align*}
    u'^* \Psi_{\Delta} \mathscr{F}_{\{ \Delta \} \cup J, (\bigotimes_{i \in I} W_i) \boxtimes V} &\simeq \Psi_{\Delta} u'^* \mathscr{F}_{\{ \Delta \} \cup J, (\bigotimes_{i \in I} W_i) \boxtimes V} \\
    &\simeq \Psi_{\Delta} u^* \mathcal{S}_{\{ \Delta \} \cup J, (\bigotimes_{i \in I} W_i) \boxtimes V} \\
    &\simeq u^* \Psi_{\Delta} \mathcal{S}_{\{ \Delta \} \cup J, (\bigotimes_{i \in I} W_i) \boxtimes V} \\
    u'^* \Psi_1 \dots \Psi_n \mathscr{F}_{I \cup J, W \boxtimes V} &\simeq \Psi_1 \dots \Psi_n u'^* \mathscr{F}_{I \cup J, W \boxtimes V} \\
    &\simeq \Psi_1 \dots \Psi_n u^* \mathcal{S}_{I \cup J, W \boxtimes V} \\
    &\simeq u^* \Psi_1 \dots \Psi_n \mathcal{S}_{I \cup J, W \boxtimes V}
\end{align*}
We now show the following lemma:
\begin{lemma}\label{monoidalsheaves}
Under the assumptions of the above lemma, the correspondence
\[
\begin{tikzcd}
\Psi_{\Delta} \mathcal{S}_{\{ \Delta \} \cup J, (\bigotimes_{i \in I} W_i) \boxtimes V} & \Upsilon \mathcal{S}_{I \cup J, W \boxtimes V} \arrow[l] \arrow[r] & \Psi_1 \cdots \Psi_n \mathcal{S}_{I \cup J, W \boxtimes V}
\end{tikzcd}
\]
induces an isomorphism natural in $W$ and $V$,
\begin{equation}
    \Psi_{\Delta} \mathcal{S}_{\{ \Delta \} \cup J, (\bigotimes_{i \in I} W_i) \boxtimes V} \cong \Psi_1 \dots \Psi_n \mathcal{S}_{I \cup J, W \boxtimes V}.
\end{equation}
Both sides of the equation are sheaves over the preimage of $s \times (C \setminus (N \cup P))^J$ in the Beilinson-Drinfeld Grassmannian $\Gr_{G, \{ 1 \} \cup J}$.
\end{lemma}
Since the correspondence
\[
\begin{tikzcd}
\Psi_{\Delta} \mathscr{F}_{\{ \Delta \} \cup J, (\bigotimes_i W_i) \boxtimes V} & \Upsilon \mathscr{F}_{I \cup J, W \boxtimes V} \arrow[l] \arrow[r] & \Psi_1 \cdots \Psi_n \mathscr{F}_{I \cup J, W \boxtimes V}
\end{tikzcd}
\]
pulls back to an isomorphism by the lemma, it is an isomorphism to begin with.
\end{proof}

\begin{proof}[Proof of Lemma~\ref{monoidalsheaves}]
This statement is \'{e}tale local on the curve, so we can reduce to the case where $G$ has parahoric reduction only at a point $p \in C$.  We denote the complement by $C^\circ$.

Recall that $c$ denotes the convolution map on the Beilinson-Drinfeld Grassmannian.  By writing $\mathcal{S}_{I \cup J,W \boxtimes V}$ and $\mathcal{S}_{\{ \Delta \} \cup J, \otimes_i W_i \boxtimes V}$ as $c_! \mathcal{S}_{I,W} \tilde{\boxtimes} \mathcal{S}_{J,V}$ and $c_! \mathcal{S}_{\{ \Delta \},\otimes_i W_i} \tilde{\boxtimes} \mathcal{S}_{J,V}$ and applying proper base change, it suffices to show the isomorphisms
\[
\begin{tikzcd}
\Psi_{\Delta} \mathcal{S}_{\{ \Delta \}, (\bigotimes_{i \in I} W_i)} \tilde{\boxtimes} \mathcal{S}_{J,V} & \Upsilon \mathcal{S}_{I,W} \tilde{\boxtimes} \mathcal{S}_{J,V} \arrow[l] \arrow[r] & \Psi_1 \cdots \Psi_n \mathcal{S}_{I,W} \tilde{\boxtimes} \mathcal{S}_{J,V}
\end{tikzcd}
\]

By using the fact that the product of affine Grassmannians is an \'{e}tale local model for an iterated affine Grassmanian, it suffices to show that
\[
\begin{tikzcd}
\Psi_{\Delta} \mathcal{S}_{\{ \Delta \}, (\bigotimes_{i \in I} W_i)} \boxtimes \mathcal{S}_{J,V} & \Upsilon \mathcal{S}_{I,W} \boxtimes \mathcal{S}_{J,V} \arrow[l] \arrow[r] & \Psi_1 \cdots \Psi_n \mathcal{S}_{I,W} \boxtimes \mathcal{S}_{J,V}
\end{tikzcd}
\]
as sheaves on the product of affine Grassmannians.  The left side of the diagram can be identified with
\begin{equation}\label{Centralsheafcomputation}
Z_{\bigotimes_{i \in I} W_i} \boxtimes S_{J, V},
\end{equation}
where $Z_{\bigotimes_{i \in I} W_i}$ is a central sheaf following the work of Gaitsgory \cite{gaitsgory1999construction} and Zhu \cite{zhu2014coherence}.

We will prove the right hand side is equal to \ref{Centralsheafcomputation} by induction on $|I|$.  In the case $|I| = 1$, there is nothing to prove.  The case $|I| = 2$ is proved for the Iwahori case by Gaitsgory \cite{gaitsgory2004braiding} by using the $2$-dimensional iterated Beilinson-Drinfeld Grassmannian which is \'{e}tale locally a product of $1$-dimensional Beilinson-Drinfeld Grassmannians.  On the product we can apply the K\"unneth formula, and the small resolution allows us to transfer this result to the Grassmannian itself.  This result generalizes word for word to parahoric cases.

For the unipotent radical of the Iwahori, the relevant result is due to Bezrukavnikov \cite[Proposition~13]{bezrukavnikov2016two}.  We note that the central sheaves in that paper compute the case of unipotent nearby cycles at this level and in this setting Bezrukavnikov explains how to get around the failure of properness for the convolution product map.

Now the inductive step follows from the case $|I| = 2$, as we can use an iterated affine Grassmannian again to reduce to showing
\[ \Psi_0 ( Z_{\bigotimes_{i \in I} W_i} \boxtimes \mathcal{S}_{\{ 0 \}, U} \boxtimes \mathcal{S}_{J, V} ) \cong Z_{\bigotimes_{i \in I} W_i \otimes U} \boxtimes \mathcal{S}_{J, V} \]
which follows by rewriting the left hand side as
\[ \Psi_0 \Psi_{\Delta} ( \mathcal{S}_{\{ \Delta \},\bigotimes_{i \in I} W_i} \boxtimes \mathcal{S}_{\{0 \},U} \boxtimes S_{J, V} ). \]
\end{proof}

\begin{remark}
As noted by Cong Xue, the use of the local models theorem is not necessary, as one can also use the smoothness property of $\epsilon^{\Xi}$ instead, which has the advantage of being more canonical.
\end{remark}

Let $\Psi_J$ be the iterated nearby cycles $\Psi_{j_1} \cdots \Psi_{j_k}$ for $J = \{ j_1, \dots, j_k \}$.  Let $J_1$ and $J_2$ be identical, disjoint copies of $J$.  We use the following lemma, which generalizes Lemma~\ref{nearbycoalescence}.

\begin{lemma}\label{lem:monoidalJ}
For $(J_1, V_1)$ and $(J_2, V_2)$, let $(J, V)$ be defined by having a cocartesian map $(J_1 \cup J_2, V_1 \boxtimes V_2) \to (J, V)$ for $J_1 \cup J_2 \to J$ the map identifying both $J_1$ and $J_2$ with $J$.
\begin{equation}
    \Psi_{J} \mathscr{F}_{I \cup J, W \boxtimes V} \cong \Psi_{J_1} \Psi_{J_2} \mathscr{F}_{I \cup J_1 \cup J_2, W \boxtimes V_1 \boxtimes V_2}.
\end{equation}
Both sides of the equation are sheaves over $\mathfrak{p}^{-1}(s \times (C \setminus \widehat{N})^J) \cong (\mathrm{Fl}_p)_{\overline{\mathbb{F}_q}} \times_{\overline{\mathbb{F}_q}} \Gr_{G,J}|_{(C \setminus \widehat{N})^J}$.
\end{lemma}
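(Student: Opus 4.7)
The plan is to follow the strategy of Lemma~\ref{nearbycoalescence}, now applied with the partial diagonal $\Delta_\zeta \colon C^J \hookrightarrow C^{J_1 \cup J_2}$ that pairs $j_1^{(m)}$ with $j_2^{(m)}$ for each $m = 1, \ldots, |J|$, in place of the full diagonal. By the local models theorem for $\Cht^{\le \mu}_{G, \Xi, I \cup J_1 \cup J_2, W \boxtimes V_1 \boxtimes V_2}$, it suffices to prove the analogous statement for Satake sheaves on the Beilinson--Drinfeld Grassmannian. The convolution decomposition
\[ \mathcal{S}_{I \cup J_1 \cup J_2, W \boxtimes V_1 \boxtimes V_2} \cong c_!(\mathcal{S}_{I, W} \tilde{\boxtimes} \mathcal{S}_{J_1, V_1} \tilde{\boxtimes} \mathcal{S}_{J_2, V_2}), \]
together with proper base change and the étale-local product structure of the iterated Grassmannian, lets me factor off the $\mathcal{S}_{I, W}$ piece (which carries no nearby cycles) and reduces the claim to
\[ \Psi_J \mathcal{S}_{J, V} \cong \Psi_{J_1} \Psi_{J_2}(\mathcal{S}_{J_1, V_1} \boxtimes \mathcal{S}_{J_2, V_2}), \]
where $\mathcal{S}_{J, V} \cong \Delta_\zeta^* \mathcal{S}_{J_1 \cup J_2, V_1 \boxtimes V_2}$ arises from the fusion property of Satake sheaves.

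To establish the reduced statement, I invoke Proposition~\ref{prop:omnibusupsilon} on the base $C^{J_1 \cup J_2}$, which produces a correspondence
\[ \Psi_{J_1} \Psi_{J_2}(\mathcal{S}_{J_1, V_1} \boxtimes \mathcal{S}_{J_2, V_2}) \leftarrow \Upsilon(\mathcal{S}_{J_1 \cup J_2, V_1 \boxtimes V_2}) \to \Psi_J \mathcal{S}_{J, V}. \]
I then iterate the K\"unneth formula (Proposition~\ref{prop:omnibusupsilon}(4)) one pair at a time, using the étale-local product structure on the Beilinson--Drinfeld Grassmannian to split the sheaf $\mathcal{S}_{J_1, V_1} \boxtimes \mathcal{S}_{J_2, V_2}$ into an external product of $|J|$ pair-factors of the form $\mathcal{S}_{\{j_1^{(m)}\}, V_1^{(m)}} \boxtimes \mathcal{S}_{\{j_2^{(m)}\}, V_2^{(m)}}$, so that $\Upsilon$, $\Psi_{J_1} \Psi_{J_2}$, and $\Psi_J$ all factor accordingly. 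Each external factor is then handled by Lemma~\ref{nearbycoalescence} applied with $|I| = 2$, which fuses the single pair $(j_1^{(m)}, j_2^{(m)})$ into $j^{(m)}$ at the parahoric point. Assembling the $|J|$ factor-isomorphisms as an external product yields the desired isomorphism.

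The main obstacle is a bookkeeping verification: I need to confirm that the iterated K\"unneth decomposition matches the fusion isomorphism $\Delta_\zeta^* \mathcal{S}_{J_1 \cup J_2, V_1 \boxtimes V_2} \cong \mathcal{S}_{J, V}$. After étale localization, the fusion corresponds to the per-pair tensor products $V^{(m)} \cong V_1^{(m)} \otimes V_2^{(m)}$, which matches the cocartesian structure on $\Rep(\widehat{G}^{\bullet})$ — whose fusion morphisms are precisely tensor products over the fibers of the underlying map of finite sets. For $\zeta \colon J_1 \cup J_2 \to J$ those fibers are exactly the pairs $\{j_1^{(m)}, j_2^{(m)}\}$, so the Satake and $\Upsilon$ decompositions align. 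Finally, as in Lemma~\ref{monoidalsheaves}, the isomorphism for Satake sheaves pulls back along the smooth local model correspondence to give the asserted isomorphism of shtuka sheaves.
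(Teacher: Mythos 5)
Your overall strategy (reduce via the local model and the convolution decomposition to Satake sheaves, then split off pairs) is in the spirit of the ``direct proof following Lemma~\ref{nearbycoalescence}'' that the paper alludes to, but the final assembly step has a genuine gap. All $|J|$ pairs $(j_1^{(m)}, j_2^{(m)})$ degenerate to the \emph{same} point $p$, so the \'etale-local product structure you invoke is only available on the iterated (convolution) Grassmannian, not on $\Gr_{J_1\cup J_2}$ itself near the total diagonal over $p$. After taking nearby cycles, each pair-factor produces a central sheaf $Z_{V_1^{(m)}\otimes V_2^{(m)}}$ on $\mathrm{Fl}_p$, and the reassembly is the pushforward $c_!$ along the convolution map of affine flag varieties over the special fiber --- i.e.\ a convolution product $Z_{V^{(1)}}\ast\cdots\ast Z_{V^{(|J|)}}$ on a single $\mathrm{Fl}_p$ --- not an external product on $\mathrm{Fl}_p^{|J|}$. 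As written, the statement you reduce to, $\Psi_J\mathcal{S}_{J,V}\cong\Psi_{J_1}\Psi_{J_2}(\mathcal{S}_{J_1,V_1}\boxtimes\mathcal{S}_{J_2,V_2})$, compares sheaves living on different spaces, and ``assembling as an external product'' does not produce the left-hand side. To close the gap you must either carry out the $c_!$ reassembly and identify the resulting convolution of central sheaves with $\Psi_J\mathcal{S}_{J,V}$ --- which is exactly the monoidal property of the central functor and amounts to re-running the induction in Lemma~\ref{monoidalsheaves} --- or verify that both sides become matching twisted products on the iterated Grassmannian before applying $c_!$.

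The paper's own proof is much shorter and sidesteps all of this: apply Lemma~\ref{nearbycoalescence} (with the roles of $I$ and $J$ swapped) to \emph{each} side separately, identifying both with the diagonal nearby cycle $\Psi_{\Delta}\mathscr{F}_{I\cup\{\Delta\}, W\boxtimes(\bigoplus\bigotimes_m V_1^{(m)}\otimes V_2^{(m)})}$; the two identifications agree because the totally fused representations coincide. You may want to adopt that route, since the pair-by-pair K\"unneth argument, once corrected, essentially reproves Lemma~\ref{monoidalsheaves} rather than using it.
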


\begin{proof}
We can naturally identify both sides to the sheaf
\[\Psi_{\Delta} \mathscr{F}_{I \cup \{ \Delta \}, W \boxtimes \left(\bigoplus \bigotimes V_{1,i} \otimes V_{2,i}\right)}.\]
The lemma can also be proved directly, following the proof of Lemma~\ref{nearbycoalescence}.
\end{proof}

The above proof seems to destroy information about the potential Weil group action, except on the diagonal, but we do not need it for that purpose: only to construct an inverse to the canonical map on nearby cycles, which is automatically Galois equivariant.

\begin{corollary}
Let $(J_1, V_1)$, $(J_2, V_2)$, and $(J, V)$ be as in the above lemma.  The isomorphisms in Lemma \ref{nearbycoalescence} and \ref{lem:monoidalJ} are compatible with the natural $!$-pushforward map.  In particular, 
\begin{equation}\label{diag:fusioncommutes}
    \begin{tikzcd}
    R \mathfrak{p}_! \Psi_{J} \mathscr{F}_{I \cup J, W \boxtimes V} \arrow[r] \arrow[d, "\simeq"] & \Psi_{J} R \mathfrak{p}_! \mathscr{F}_{I \cup J, W \boxtimes V} \arrow[d, "\simeq"] \\
    R \mathfrak{p}_! \Psi_{J_1} \Psi_{J_2} \mathscr{F}_{I \cup J_1 \cup J_2, W \boxtimes V_1 \boxtimes V_2} \arrow[r] & \Psi_{J_1} \Psi_{J_2} R \mathfrak{p}_! \mathscr{F}_{I \cup J_1 \cup J_2, W \boxtimes V_1 \boxtimes V_2}
    \end{tikzcd}
\end{equation}
\end{corollary}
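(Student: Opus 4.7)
The plan is to factor both vertical isomorphisms in (\ref{diag:fusioncommutes}) through Beilinson's functor $\Upsilon$ of Proposition~\ref{prop:omnibusupsilon} and then derive the commutativity from part~(2) of that proposition, which is exactly the compatibility of the base-change morphism for $\Upsilon$ with the canonical maps to $\Psi_1 \cdots \Psi_n$ and to $\Psi_{\Delta}$.

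The proof of Lemma~\ref{lem:monoidalJ} already realizes its asserted isomorphism as a composite
\begin{equation*}
\Psi_{J_1} \Psi_{J_2} \mathscr{F}_{I \cup J_1 \cup J_2, W \boxtimes V_1 \boxtimes V_2} \xleftarrow{\sim} \Upsilon \mathscr{F}_{I \cup J_1 \cup J_2, W \boxtimes V_1 \boxtimes V_2} \xrightarrow{\sim} \Psi_{\Delta} \mathscr{F}_{I \cup \{\Delta\}, W \boxtimes (\bigoplus \bigotimes V_{1,i} \otimes V_{2,i})}
\end{equation*}
and symmetrically for the pair $(\Psi_J, \Psi_{\Delta})$ via a second copy of $\Upsilon$. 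I would first pin down this zigzag factorization of the vertical arrows in (\ref{diag:fusioncommutes}), separately on $\mathscr{F}$ and on $R\mathfrak{p}_! \mathscr{F}$, so that each vertical arrow becomes a sequence of canonical maps from Proposition~\ref{prop:omnibusupsilon}(1) together with the cocartesian fusion morphism in the representation variable.

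Having done this, the square (\ref{diag:fusioncommutes}) decomposes as a pasting of commuting rectangles. One rectangle is the naturality of the single natural transformation $R\mathfrak{p}_! \Upsilon \to \Upsilon R\mathfrak{p}_!$ in its sheaf argument, applied to the fusion morphism $\mathscr{F}_{I \cup J_1 \cup J_2, W \boxtimes V_1 \boxtimes V_2} \to \mathscr{F}_{I \cup \{\Delta\}, \cdots}$ of representations; this commutes tautologically. Each remaining rectangle is precisely the content of Proposition~\ref{prop:omnibusupsilon}(2), namely compatibility of this base-change transformation with the canonical maps $\Upsilon \to \Psi_1 \cdots \Psi_n$ and $\Upsilon \to \Psi_{\Delta}$. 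Pasting these yields commutativity of the outer rectangle, which is (\ref{diag:fusioncommutes}).

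I do not foresee a serious obstacle beyond this bookkeeping: Proposition~\ref{prop:omnibusupsilon} was set up so that the base-change map for $\Upsilon$ mediates between the various nearby-cycle presentations, and Lemma~\ref{lem:monoidalJ} has already unwound the fusion identification through $\Upsilon$. The single point that needs checking is that the canonical maps chosen in both instances of Proposition~\ref{prop:omnibusupsilon}(1) are compatible with the fusion morphism of representations, which is automatic from the functoriality of $\Upsilon$ in its sheaf variable together with the cocartesian structure of the source category $\Rep(\widehat{G}^{\bullet})$.
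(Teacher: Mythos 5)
Your proposal is correct and follows essentially the same route as the paper, which likewise deduces the commutativity directly from Proposition~\ref{prop:omnibusupsilon}(2) with the vertical arrows supplied by Lemmas~\ref{lem:monoidalJ} and~\ref{nearbyDrinfeld}. Your version merely spells out the pasting of rectangles (naturality of $R\mathfrak{p}_! \Upsilon \to \Upsilon R\mathfrak{p}_!$ plus the two compatibility squares) that the paper leaves implicit.
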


\begin{proof}
This follows immediately from Proposition~\ref{prop:omnibusupsilon} (2), where the vertical arrow on the left is given by Lemma~\ref{lem:monoidalJ} and the vertical arrow on the right is given by Lemma~\ref{nearbyDrinfeld}.
\end{proof}

\section{Nearby cycles commute with pushforward via factorization structure}\label{sec:nearby}

In what follows, we often suppress mention of the group scheme $G$, which itself contains the information of the ramification and level structure along $\widehat{N}$.  We also suppress the lattice $\Xi$ in $\Cht_{G, \Xi, I \cup J, W \boxtimes V} =: \Cht_{I \cup J, W \boxtimes V}$.  We pick $p \in P \subset C$ and pick a geometric point $s$ above $p$.  Using this, we reduce to the local situation by taking a Henselian trait $S$ giving the specialization from a geometric generic point $\overline{\eta}$ of $C$ to $s$.

The following maps are defined from the geometric Satake functor and the coalescence of legs.
\[ \mathscr{F}_{\{1,2\},W \boxtimes 1} \to \mathscr{F}_{\{1,2\}, W \boxtimes (W^* \otimes W)}. \]
\[ \mathscr{F}_{\{2,3\},(W \otimes W^*) \boxtimes W} \to \mathscr{F}_{\{2,3\}, 1 \boxtimes W}. \]
The fact that the composition of these maps restricted to the diagonal is the identity is known as ``Zorro's lemma''. This fact will be used in Theorem \ref{thm:commutingnearbycycles}.  We will also use the following description of sheaves in the case when one leg carries the trivial representation.
\begin{lemma}\label{trivial}
The sheaf $\mathscr{F}_{I \cup \{ 1 \}, W \boxtimes 1}$ on $\Cht_{I \cup \{ 1 \}, W \boxtimes 1} |_{(C \setminus (N \cup P))^{I \cup \{ 1 \}}}$ is identified with the sheaf $\mathscr{F}_{I, W} |_{(C \setminus (N \cup P))^{I}} \boxtimes \overline{\mathbb{Q}_{\ell}}_{C \setminus (N \cup P)}$ under the identification
\begin{equation} \Cht_{I \cup \{ 1 \}, W \boxtimes 1} \cong \Cht_{I, W} \times (C \setminus N) \end{equation}
\end{lemma}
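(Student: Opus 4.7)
The plan is to reduce the statement to an analogous fact on the Beilinson-Drinfeld Grassmannian and then transport it back via the map $\epsilon_G^{\Xi}$. Since $\mathscr{F}_{I \cup \{1\}, W \boxtimes 1}$ is defined by pullback from $\mathcal{S}_{I \cup \{1\}, W \boxtimes 1}$ along $\epsilon_G^{\Xi}$ (or equivalently, via the local models diagram), and similarly for $\mathscr{F}_{I, W}$, it suffices to exhibit a compatible identification
\[ \mathcal{S}_{I \cup \{1\}, W \boxtimes 1} \cong \mathcal{S}_{I, W} \boxtimes \overline{\mathbb{Q}_\ell}_{C} \]
on the Beilinson-Drinfeld Grassmannian restricted to $(C \setminus (N \cup P))^{I \cup \{1\}}$, together with a compatible identification $\Gr_{G, I \cup \{1\}, W \boxtimes 1} \cong \Gr_{G, I, W} \times (C \setminus (N \cup P))$ of the supports.

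First I would identify $\mathcal{S}_{\{1\}, 1}$. Under geometric Satake, the trivial representation $1 \in \Rep(\widehat{G})$ corresponds to the skyscraper sheaf (up to shift) at the unit section $e \colon C \to \Gr_{G, \{1\}}$. The unit section is a closed substack of the Beilinson-Drinfeld Grassmannian which is isomorphic to $C$ itself (via the coordinate of the leg), so $\Gr_{G, \{1\}, 1} \cong C$ and $\mathcal{S}_{\{1\}, 1} \cong \overline{\mathbb{Q}_\ell}_C$ under this identification. Next I would invoke the factorization isomorphism $\mathcal{S}_{I \cup \{1\}, W \boxtimes 1} \cong c_!(\mathcal{S}_{I, W} \, \tilde{\boxtimes} \, \mathcal{S}_{\{1\}, 1})$ recalled in Section~\ref{sec:modulishtukas}. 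Since one factor is supported on the unit section, the iterated Grassmannian restricted to this support collapses étale-locally to the product $\Gr_{G, I, W} \times C$, and the convolution map $c$ becomes an isomorphism. Therefore $c_!(\mathcal{S}_{I, W} \, \tilde{\boxtimes} \, \overline{\mathbb{Q}_\ell}_C) \cong \mathcal{S}_{I, W} \boxtimes \overline{\mathbb{Q}_\ell}_{C}$.

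On the shtuka side, the corresponding geometric picture is that a shtuka with legs indexed by $I \cup \{1\}$ whose leg at position $1$ has trivial modification type is the same as a shtuka with legs indexed by $I$ together with an arbitrary choice of point $x_1 \in C \setminus N$ (the location of the ghost leg). This gives the product decomposition $\Cht_{I \cup \{1\}, W \boxtimes 1} \cong \Cht_{I, W} \times (C \setminus N)$. Pulling back the external product decomposition on the Grassmannian side through $\epsilon_G^{\Xi}$ (or the local model étale correspondence) yields the claimed identification of $\mathscr{F}_{I \cup \{1\}, W \boxtimes 1}$ with $\mathscr{F}_{I, W} \boxtimes \overline{\mathbb{Q}_\ell}_{C \setminus (N \cup P)}$.

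The main point requiring care is compatibility of the factorization isomorphism with the pullback under $\epsilon_G^{\Xi}$: one must check that the convolution diagram and the unit section identifications transport correctly across the (étale local) equivalence between the iterated and ordinary Beilinson-Drinfeld Grassmannians, and across the quotient by $G_{I, \infty}$. This is essentially the same bookkeeping that underlies the fusion isomorphisms already established in Proposition~\ref{nearby:shtuka}, and can be handled by the same local-model argument used there; no new input beyond the factorization of Satake sheaves and the triviality of the unit-section sheaf is needed.
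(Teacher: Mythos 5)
Your proposal is correct and follows essentially the same route as the paper: identify $\Cht_{I \cup \{1\}, W \boxtimes 1}$ with $\Cht_{I,W} \times (C \setminus N)$ by noting the modification at the trivial leg is the identity, then observe that the sheaf splitting is pulled back from the corresponding splitting on the Beilinson--Drinfeld Grassmannian. The paper states this in two sentences; your write-up simply makes explicit the Grassmannian-side content (the unit section and the collapse of the convolution), which is a faithful elaboration rather than a different argument.
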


\begin{proof}
The identification of moduli spaces of shtukas is by sending $((x, x_1), \mathcal{E}, (\phi_I, \phi_1))$ to the pair of $(x, \mathcal{E}, \phi_I)$ and $x_1$, noting that $\phi_1$ must be the identity.  After restricting the locus, the identification of sheaves is by noting that this splitting is compatible with the corresponding splitting on the Beilinson-Drinfeld Grassmannian.
\end{proof}

Our main theorem shows that nearby cycles on the sheaves $\mathscr{F}$ commutes with compactly supported pushforward along $\mathfrak{p}$.  Let $I$ and $J$ be finite sets, and let $W \in \Rep(\widehat{G}^I)$ and $V \in \Rep(\widehat{G}^J)$ so we can form the sheaves $\mathscr{F}_{I \cup J, W \boxtimes V}$.  These sheaves live on shtukas over the power of the curve $C^{I \cup J}$, and we will take nearby cycles along directions $J$.  This will be an exact analogue of the result of \cite[Proposition~4.1.4, Proposition~3.2.1]{xue2020smoothness} but using nearby cycles instead of specialization maps.  In fact, this result can be seen to imply the smoothness result of loc. cit. over very special points by showing that vanishing cycles vanish on the sheaves over $(C \setminus \widehat{N})^{I}$.

\begin{theorem}\label{thm:commutingnearbycycles}
Consider the following cases:
\begin{enumerate}
    \item Nearby cycles is taken with respect to a geometric point $s$ over a closed point $p$ such that $G$ has parahoric reduction at $p$.
    \item Nearby cycles $\Psi$ is \textit{unipotent} nearby cycles and taken with respect to a geometric point $s$ over a point $p$.  $G$ is split on the generic fiber of the completion and has reduction according to the unipotent radical of the Iwahori.
\end{enumerate}

Then the natural map
\begin{equation} \mathrm{can} \colon R\mathfrak{p}_! \Psi_J \mathscr{F}_{I \cup J,W \boxtimes V} \to \Psi_J R\mathfrak{p}_! \mathscr{F}_{I \cup J, W \boxtimes V} \end{equation}
is an isomorphism.
\end{theorem}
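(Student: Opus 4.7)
The plan is to follow the template of Xue's proof of the smoothness theorem in~\cite{xue2020smoothness}, with specialization maps replaced by nearby cycles throughout. Since $\mathfrak{p}$ is not proper, $\mathrm{can}$ is not an isomorphism for trivial reasons, so I need to construct an explicit two-sided inverse via a Zorro-lemma argument. The tools have been prepared in the preceding sections: the Zorro creation and annihilation maps displayed just before the theorem statement, Lemma~\ref{trivial} on trivial legs, the coalescence isomorphisms of Lemma~\ref{nearbycoalescence} and Lemma~\ref{lem:monoidalJ}, the compatibility Corollary from diagram~\eqref{diag:fusioncommutes}, and the Drinfeld-lemma constancy proved at the end of Section~\ref{section:excursion}.

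The inverse is built as follows. I introduce two disjoint auxiliary copies $J', J''$ of $J$. Using Lemma~\ref{trivial} I identify $\mathscr{F}_{I \cup J, W \boxtimes V}$ with the stalk at a point of $\mathscr{F}_{I \cup J \cup J' \cup J'', W \boxtimes V \boxtimes 1 \boxtimes 1}$, so that $\Psi_J R\mathfrak{p}_! \mathscr{F}_{I \cup J, W \boxtimes V}$ becomes a sheaf on a larger parameter space with two extra generic directions. I then apply the creation map $1 \to V^* \otimes V$ on the pair $(J', J'')$, and the coalescence isomorphisms of Lemma~\ref{lem:monoidalJ} let me fuse $J$ with $J'$ so that the pair $V \otimes V^*$ now sits on a single diagonal leg while a copy of $V$ sits on the remaining leg $J''$. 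On this configuration, I keep the auxiliary coordinate $J''$ at a generic point of $C$, so that the relevant $\Psi_{J''}$ acts on an ind-sheaf which, by the constancy statement of Section~\ref{section:excursion}, is constant; in particular $R\mathfrak{p}_!$ commutes past $\Psi_{J''}$ for free. Finally I apply annihilation $V \otimes V^* \to 1$ on the fused diagonal leg and use Lemma~\ref{trivial} again to collapse it away, landing in $R\mathfrak{p}_! \Psi_{J''} \mathscr{F}_{I \cup J'', W \boxtimes V}$, which upon renaming $J'' = J$ is the source of $\mathrm{can}$. Zorro's lemma on the Beilinson-Drinfeld Grassmannian, transferred via the local models theorem exactly as in the proof of Lemma~\ref{monoidalsheaves}, shows that both compositions of the constructed map with $\mathrm{can}$ are the identity.

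The main obstacle I anticipate is careful bookkeeping: all of this must be performed inside Harder-Narasimhan truncations $\Cht^{\le \mu}_{G, I}/\Xi$ to make $R\mathfrak{p}_!$ well-defined on finite-type stacks, and the auxiliary partial-Frobenius-filtered structure of Section~\ref{section:excursion} must be respected when invoking the constancy result to commute $R\mathfrak{p}_!$ past the generically-placed nearby cycles. A secondary concern is equivariance for the local Weil-group actions at $s$, but as noted in the remark following Lemma~\ref{lem:monoidalJ} it suffices to produce the inverse as a map of sheaves, since Weil equivariance then follows automatically from the naturality of $\mathrm{can}$.
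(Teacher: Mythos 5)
Your overall skeleton---three copies of $J$ carrying $V$, $V^*$, $V$, with creation, fusion, and annihilation arranged so that Zorro's lemma collapses the composite---is the same as the paper's, and most of the ingredients you cite are the right ones. But the step that has to do the real work, namely transferring the nearby-cycles operation from \emph{outside} $R\mathfrak{p}_!$ (on the original leg $J$) to \emph{inside} $R\mathfrak{p}_!$ (on the surviving auxiliary leg $J''$), is not justified. You assert that since $J''$ stays generic, the constancy theorem of Section~\ref{section:excursion} makes the relevant ind-sheaf constant and hence ``$R\mathfrak{p}_!$ commutes past $\Psi_{J''}$ for free.'' That is circular: constancy of $\varinjlim_\mu R\mathfrak{p}_!\mathscr{F}^{\le\mu}$ over $\overline{\eta}^{J''}$ only controls the target $\Psi_{J''}R\mathfrak{p}_!$ of the canonical map, not the source $R\mathfrak{p}_!\Psi_{J''}$, and asserting that the two agree is exactly the statement of the theorem for that leg; if constancy alone implied this, no Zorro argument would be needed at all. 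Relatedly, because you introduce the auxiliary legs by restricting to a ``stalk at a point'' and keep them generic, at the moment you fuse $J$ with $J'$ you are merging a leg on which $\Psi$ has already been applied outside $R\mathfrak{p}_!$ with a leg that is still generic; the fusion statements (Lemma~\ref{nearbyDrinfeld}, Lemma~\ref{lem:monoidalJ}) only apply when nearby cycles is taken on both legs on the same side of $R\mathfrak{p}_!$.

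The paper's resolution is to introduce the middle leg with the \emph{trivial} representation and with its $\Psi$ already placed inside $R\mathfrak{p}_!$ (harmless by Lemma~\ref{trivial}, since the sheaf is constant in that direction), perform creation and the splitting of Lemma~\ref{lem:monoidalJ} inside $R\mathfrak{p}_!$, and then apply one instance of the always-defined map $\mathrm{can}$ in that direction to move its $\Psi$ outside, where it is fused with the original $\Psi_{J}$ --- this fusion outside $R\mathfrak{p}_!$, not any commutation, is where the Section~\ref{section:excursion} constancy enters, via Lemma~\ref{nearbyDrinfeld} applied to the filtered cocartesian functor of Proposition~\ref{nearby:shtuka} --- and is then annihilated, while the third leg keeps its $\Psi$ inside $R\mathfrak{p}_!$ throughout. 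Finally, declaring that ``Zorro's lemma shows both compositions are the identity'' is not yet a proof: one must check square by square that the two large diagrams interleaving $\mathrm{can}$ with creation, splitting, fusion, and annihilation commute (naturality of $\mathrm{can}$ together with the compatibility in~\eqref{diag:fusioncommutes}); that verification is the bulk of the paper's argument.
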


\begin{proof}
We begin by constructing an inverse map.  Let $J_1, J_2, J_3$ be three identical, disjoint copies of $J$.
\begin{align*}
    \Psi_J R\mathfrak{p}_! \mathscr{F}_{I \cup J,W \boxtimes V} &\cong \Psi_{J_1} R\mathfrak{p}_! \Psi_{J_2} \mathscr{F}_{I \cup J_1 \cup J_2, W \boxtimes V \boxtimes 1} \\
    &\to \Psi_{J_1} R\mathfrak{p}_! \Psi_{J_2} \mathscr{F}_{I \cup J_1 \cup J_2, W \boxtimes V \boxtimes (V^* \otimes V)} \\
    &\cong \Psi_{J_1} R\mathfrak{p}_! \Psi_{J_2} \Psi_{J_3} \mathscr{F}_{I \cup J_1 \cup J_2 \cup J_3, W \boxtimes V \boxtimes V^* \boxtimes V} \\
    &\to \Psi_{J_1} \Psi_{J_2} R\mathfrak{p}_! \Psi_{J_3} \mathscr{F}_{I \cup J_1 \cup J_2 \cup J_3, W \boxtimes V \boxtimes V^* \boxtimes V} \\
    &\cong \Psi_{J_2} R\mathfrak{p}_! \Psi_{J_3} \mathscr{F}_{I \cup J_2 \cup J_3, W \boxtimes (V \otimes V^*) \boxtimes V} \\
    &\to \Psi_{J_2} R\mathfrak{p}_! \Psi_{J_3} \mathscr{F}_{I \cup J_2 \cup J_3, W \boxtimes 1 \boxtimes V} \\
    &\cong R\mathfrak{p}_! \Psi_{J} \mathscr{F}_{I \cup J, V \boxtimes W}
\end{align*}
We now justify that this gives a well-defined map.
\begin{enumerate}
    \item The first arrow is an isomorphism because of Lemma~\ref{trivial}.
    \item The second arrow follows by functoriality of $\mathrm{Rep}(G^I) \to \mathrm{Perv}(\Cht_{G, I})$.
    \item The third arrow exists and is an isomorphism by Lemma~\ref{lem:monoidalJ}.
    \item The fourth arrow is the canonical map $Rf_! \Psi \to \Psi Rf_!$.
    \item The fifth arrow is an isomorphism by Lemma~\ref{nearbyDrinfeld} and Lemma~\ref{nearby:shtuka}.
    \item The sixth arrow follows by functoriality.
    \item The seventh arrow is an isomorphism because of Lemma~\ref{trivial}.
\end{enumerate}

We now want to show that the map constructed is a left inverse of $\mathrm{can}$, and in particular $\mathrm{can}$ is injective.  This can be seen because the squares commute in Figure 5.1. %

\begin{figure}\label{bigzorro1}
\begin{tikzcd}
R\mathfrak{p}_! \Psi_{J_1} \Psi_{J_2} \mathscr{F}_{I \cup J_1 \cup J_2, W \boxtimes V \boxtimes 1} \arrow[rr] \arrow[dd]& & \Psi_{J_1} R\mathfrak{p}_! \Psi_{J_2} \mathscr{F}_{I \cup J_1 \cup J_2, W \boxtimes V \boxtimes 1} \arrow[dd] \\
& (1) & \\
R\mathfrak{p}_! \Psi_{J_1} \Psi_{J_2} \mathscr{F}_{I \cup J_1 \cup J_2, W \boxtimes V \boxtimes (V^* \otimes V)} \arrow[rr] \arrow[dd]& & \Psi_{J_1} R\mathfrak{p}_! \Psi_{J_2} \mathscr{F}_{I \cup J_1 \cup J_2, W \boxtimes V \boxtimes (V^* \otimes V)} \arrow[dd] \\
& (2) & \\
R\mathfrak{p}_! \Psi_{J_1} \Psi_{J_2} \Psi_{J_3} \mathscr{F}_{I \cup J_1 \cup J_2 \cup J_3, W \boxtimes V \boxtimes V^* \boxtimes V} \arrow[rr] \arrow[dd] \arrow[drr] & & \Psi_{J_1} R\mathfrak{p}_! \Psi_{J_2} \Psi_{J_3} \mathscr{F}_{I \cup J_1 \cup J_2 \cup J_3, W \boxtimes V \boxtimes V^* \boxtimes V} \arrow[d] \\
& (3) & \Psi_{J_1} \Psi_{J_2} R\mathfrak{p}_! \Psi_{J_3} \mathscr{F}_{I \cup J_1 \cup J_2 \cup J_3, W \boxtimes V \boxtimes V^* \boxtimes V} \arrow[d] \\
R\mathfrak{p}_! \Psi_{J_2} \Psi_{J_3} \mathscr{F}_{I \cup J_2 \cup J_3, W \boxtimes (V \otimes V^*) \boxtimes V} \arrow[rr] \arrow[dd] & & \Psi_{J_2} R\mathfrak{p}_! \Psi_{J_3} \mathscr{F}_{I \cup J_2 \cup J_3, W \boxtimes (V \otimes V^*) \boxtimes V} \arrow[dd] \\
& (4) & \\
R\mathfrak{p}_! \Psi_{J_2} \Psi_{J_3} \mathscr{F}_{I \cup J_2 \cup J_3, W \boxtimes 1 \boxtimes V} \arrow[rr]& & \Psi_{J_2} R\mathfrak{p}_! \Psi_{J_3} \mathscr{F}_{I \cup J_2 \cup J_3, W \boxtimes 1 \boxtimes V}
\end{tikzcd}
\caption{}
\end{figure}

In Figure 5.1, 
the bottom row is an isomorphism because the sheaves are constant in the $J_2$ direction when the corresponding tuple of representations is trivial.  The left composition is the identity when specialized to the diagonal by Zorro's lemma.  The top arrow is the map $\mathrm{can}$.  The composed vertical line on the right, together with applications of Lemma~\ref{trivial}, is the map we constructed above as the left inverse of $\mathrm{can}$.  To show that this map is a left inverse, it suffices to check that all the squares in Figure 5.1 
commute.

\begin{enumerate}
    \item Square $(1)$ expresses the fact that $\mathrm{can}$ is a natural transformation, as the map on sheaves comes from the functoriality of $\mathscr{F}$ applied to the map $1 \to V^* \otimes V$.
    \item Square $(2)$ again expresses that $\mathrm{can}$ is a natural transformation, this time using the isomorphism in Lemma~\ref{lem:monoidalJ}.
    \item Square $(3)$ is an instance of Corollary~\ref{diag:fusioncommutes}.
    \item The bottom square $(4)$ follows from naturality of $\mathrm{can}$, with the map on sheaves coming from $\mathscr{F}$ applied to the map $V \otimes V^* \to 1$.
    \item The middle triangle composes two instances of $\mathrm{can}$.
\end{enumerate}

\begin{figure}\label{bigzorro2}
\begin{tikzcd}
\Psi_{J_1} R\mathfrak{p}_! \Psi_{J_2} \mathscr{F}_{I \cup J_1 \cup J_2, W \boxtimes V \boxtimes 1} \arrow[r] \arrow[d]& \Psi_{J_1} \Psi_{J_2} R\mathfrak{p}_! \mathscr{F}_{I \cup J_1 \cup J_2, W \boxtimes V \boxtimes 1} \arrow[d] \\
\Psi_{J_1} R\mathfrak{p}_! \Psi_{J_2} \mathscr{F}_{I \cup J_1 \cup J_2, W \boxtimes V \boxtimes (V^* \otimes V)} \arrow[r] \arrow[d]& \Psi_{J_1} \Psi_{J_2} R\mathfrak{p}_! \mathscr{F}_{I \cup J_1 \cup J_2, W \boxtimes V \boxtimes (V^* \otimes V)} \arrow[d] \\
\Psi_{J_1} R\mathfrak{p}_! \Psi_{J_2} \Psi_{J_3} \mathscr{F}_{I \cup J_1 \cup J_2 \cup J_3, W \boxtimes V \boxtimes V^* \boxtimes V} \arrow[r] \arrow[d]& \Psi_{J_1} \Psi_{J_2} \Psi_{J_3} R\mathfrak{p}_! \mathscr{F}_{I \cup J_1 \cup J_2 \cup J_3, W \boxtimes V \boxtimes V^* \boxtimes V} \arrow[dd] \\
\Psi_{J_1} \Psi_{J_2} R\mathfrak{p}_! \Psi_{J_3} \mathscr{F}_{I \cup J_1 \cup J_2 \cup J_3, W \boxtimes V \boxtimes V^* \boxtimes V} \arrow[d] \arrow[ur] & \\
\Psi_{J_2} R\mathfrak{p}_! \Psi_{J_3} \mathscr{F}_{I \cup J_2 \cup J_3, W \boxtimes (V \otimes V^*) \boxtimes V} \arrow[r] \arrow[d] & \Psi_{J_2} \Psi_{J_3} R\mathfrak{p}_! \mathscr{F}_{I \cup J_2 \cup J_3, W \boxtimes (V \otimes V^*) \boxtimes V} \arrow[d] \\
\Psi_{J_2} R\mathfrak{p}_! \Psi_{J_3} \mathscr{F}_{I \cup J_2 \cup J_3, W \boxtimes 1 \boxtimes V} \arrow[r] & \Psi_{J_2} \Psi_{J_3} R\mathfrak{p}_! \mathscr{F}_{I \cup J_2 \cup J_3, W \boxtimes 1 \boxtimes V}
\end{tikzcd}
\caption{}
\end{figure}

Now we want to show that it is a right inverse.  This can be seen because the squares commute in Figure 5.2. 

The justifications for isomorphism of the top row and commutativity are the same as those in the previous paragraph, but in reverse order.
\end{proof}

\section{Consequences about the Galois action and Langlands parameters}\label{sec:Langlands}

Now that the key result about nearby cycles is available, we can use results about central sheaves from local geometric Langlands to draw a number of conclusions about the action of local Galois groups on cohomology.  This in turn will allow us to draw conclusions about the image of tame ramification in the Galois representation that Lafforgue associates to any automorphic form.

\begin{corollary}
Suppose $G$ has split parahoric reduction at a place $v$.  On the cohomology of shtukas, wild inertia acts trivially, and tame inertia acts unipotently.
\end{corollary}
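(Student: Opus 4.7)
The plan is to relate the action of inertia at $v$ on the cohomology to the monodromy of nearby cycles at $v$, apply Theorem~\ref{thm:commutingnearbycycles} to move nearby cycles past $R\mathfrak{p}_!$, and then invoke Gaitsgory's centrality results on the Beilinson-Drinfeld Grassmannian. Concretely, pick a leg $i_0 \in I$ and set $I' = I \setminus \{i_0\}$. The action of $I_{K_v}$ on the stalk of $R\mathfrak{p}_!\mathscr{F}_{I,W}$ at a geometric generic point of $(C \setminus \widehat{N})^I$, through the $i_0$-th factor of $\mathrm{Weil}(\overline{K}/K)^I$, is canonically identified with the monodromy action on the nearby cycles
\[ \Psi_{i_0} R\mathfrak{p}_! \mathscr{F}_{I, W} \]
taken in the $i_0$-th direction above $v$, with the remaining coordinates held at a geometric generic point.

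Next I would apply Theorem~\ref{thm:commutingnearbycycles} with $J = \{i_0\}$ and $I$ replaced by $I'$ to get a Galois-equivariant isomorphism $\Psi_{i_0} R\mathfrak{p}_! \mathscr{F}_{I, W} \cong R\mathfrak{p}_! \Psi_{i_0} \mathscr{F}_{I, W}$. Since proper pushforward preserves unipotency of monodromy and triviality of wild inertia, it suffices to show that the monodromy action on $\Psi_{i_0} \mathscr{F}_{I, W}$ itself is unipotent and that wild inertia acts trivially there. The local models theorem, combined with the compatibility of nearby cycles with smooth \'etale pullback (exactly as used in the proof of Lemma~\ref{monoidalsheaves}), reduces this to the analogous assertion for $\Psi_{i_0} \mathcal{S}_{I, W}$ on the Beilinson-Drinfeld Grassmannian with split parahoric reduction at $v$.

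The geometric heart of the argument is then Gaitsgory's theorem \cite{gaitsgory2004braiding}: nearby cycles of Satake sheaves on the affine Grassmannian yield central sheaves in the Iwahori Hecke category, and on such central sheaves the monodromy is unipotent and wild inertia acts trivially. As already invoked in the proof of Lemma~\ref{monoidalsheaves}, the same conclusion holds at split parahoric level, which is precisely our setting. Running the argument leg-by-leg for every $i_0 \in I$ yields the full statement about the inertia action at $v$ on the cohomology. The main obstacle is really this geometric centrality input at parahoric level; once it is granted, the remainder of the argument is a formal chain of reductions using Theorem~\ref{thm:commutingnearbycycles} and the local models theorem, with no additional difficulty beyond bookkeeping of Galois equivariance.
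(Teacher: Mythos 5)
Your proposal follows essentially the same route as the paper's proof: reduce the inertia action on cohomology to the monodromy of nearby cycles via the Galois-equivariant isomorphism of Theorem~\ref{thm:commutingnearbycycles}, transport the question to the Beilinson--Drinfeld Grassmannian by the local models theorem, and conclude from the unipotence of monodromy on central sheaves due to Gaitsgory and Zhu. The argument is correct, and if anything spells out the leg-by-leg bookkeeping more explicitly than the paper does.
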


\begin{proof}
By the result of Gaitsgory, generalized by Zhu \cite{zhu2014coherence}, nearby cycles on the affine Grassmannian are unipotent.  By the local models theorem, the action of the wild inertia on stalks of nearby cycles of $\mathscr{F}_{\{ 1 \}, W}$ is trivial and hence it acts trivially on the sheaf itself.  After pushing forward along $R\mathfrak{p}_!$, the wild inertia will still act trivially because the transformation $R\mathfrak{p}_! \Psi \to \Psi R\mathfrak{p}_!$ is Galois-equivariant, arising from proper base change theorem, and is an isomorphism in the case we are interested in.
\end{proof}

\begin{corollary}
Suppose $G$ is ramified at $v$ and after base change along a Galois extension $C' \to C$, for a point $v'$ lying over $v$, $G$ splits over $K'_{v'}$ and has parahoric reduction at $v'$.  Then wild inertia $I^1_{K'_{v'}}$ acts trivially on the cohomology of shtukas.  Moreover, if it acquires very special parahoric reduction, inertia of $K'_{v'}$ acts trivially.
\end{corollary}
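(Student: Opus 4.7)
The plan is to reduce to the split case treated in the previous corollary by base changing along the Galois extension $C' \to C$. The action of $I_{K'_{v'}}$ on the cohomology of shtukas for $G$ over $C$ is the restriction of the $I_{K_v}$-action to this subgroup, and by Theorem~\ref{thm:commutingnearbycycles}, this action is computed by the monodromy of nearby cycles $\Psi \mathscr{F}_{\{1\}, W}$ taken at a geometric point over $v$. Since nearby cycles, the Satake sheaves on the Beilinson-Drinfeld Grassmannian, and the local models theorem are all \'etale-local around $v$, I would first pull back the local picture along $\Spec \mathcal{O}_{K'_{v'}} \to \Spec \mathcal{O}_{K_v}$.

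Under this pullback, the nearby cycles of $\mathscr{F}_{\{1\}, W}$ at $v$ correspond to the nearby cycles of the pulled-back sheaf at $v'$, and the $I_{K_v}$-monodromy restricts to the $I_{K'_{v'}}$-monodromy. After this base change, $G_{C'}$ is split with parahoric reduction at $v'$, so the first corollary applies to $G_{C'}$ over $C'$ and shows that wild inertia $I^1_{K'_{v'}}$ acts trivially on nearby cycles at $v'$. Combined with Theorem~\ref{thm:commutingnearbycycles}, this gives triviality of $I^1_{K'_{v'}}$ on the cohomology of shtukas for $G$ and proves the first assertion.

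For the very special parahoric refinement, I would invoke Zhu's results on very special parahorics: when $G_{C'}$ has very special parahoric reduction at $v'$, the Schubert varieties in the affine flag variety coincide geometrically with the corresponding Schubert varieties in the spherical affine Grassmannian of the split form. Consequently, the central sheaves arising as nearby cycles of Satake sheaves agree with the spherical Satake sheaves themselves and carry trivial monodromy, so full inertia $I_{K'_{v'}}$ acts trivially on nearby cycles and hence on the cohomology of shtukas. The main obstacle is verifying the base change compatibility in detail: one must check that nearby cycles, the local models diagram, and the Satake sheaves commute with the base change from $\Spec \mathcal{O}_{K_v}$ to $\Spec \mathcal{O}_{K'_{v'}}$, and that the resulting Galois action is the correct restriction. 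Since all ingredients are local at $v$, this should follow from standard \'etale base change theorems once carefully set up.
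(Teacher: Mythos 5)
Your proposal matches the paper's argument: the paper likewise reduces to the previous (split) corollary by noting that the formation of the Beilinson--Drinfeld Grassmannian, and hence the local model and its nearby cycles, commutes with base change along $C' \to C$, and then invokes Zhu and Richarz for the triviality of inertia at very special parahorics. The base-change compatibility you flag as the "main obstacle" is exactly the point the paper handles by citing \cite[Lemma~3.2]{zhu2014coherence}, so no genuinely different route is taken.
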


\begin{proof}
This is the same as the above, except applying the work of Zhu \cite{zhu2011geometric} and later Richarz \cite{richarz2016affine}.  We note that the formation of the Beilinson-Drinfeld Grassmannian commutes with base change \cite[Lemma~3.2]{zhu2014coherence}, and we get trivial action of inertia on nearby cycles after base change.
\end{proof}

Let $K' / K$ be a finite Galois extension of the function field such that the generic fiber of $G$ splits, and let $C'$ be the normalization of the curve $C$ in $K'$.  Let $P'$ be the preimage in $C'$ of points $P$ of parahoric reduction.  Let $N'$ be the ``deep'' level structures $N \setminus P$.

For a point $p \in P$, we consider the completion of $K$ at $p$, $K_p$, and the extension $K'_{p'}$ for any point $p'$ over $p$ in $P'$.  Consider the local Weil groups $\Weil(K'_{p'}, \overline{K_{p'}})$ with its inertia subgroup $I_{{K'}_{p'}}$, wild inertia subgroup $I^1_{K'_{p'}}$, and the tame inertia quotient $I^t_{{K'}_{p'}} := I_{{K'}_{p'}} / I^1_{{K'}_{p'}}$.

Let $\Weil^{t}(C \setminus N', \overline{\eta})$ be the quotient of $\Weil(C \setminus \widehat{N}, \overline{\eta})$ by the normal subgroup generated by all wild inertia at all places in $P'$, that is, so that we have
\[
\begin{tikzcd}
\prod_{p' \in P'} I^1_{K'_{p'}} \arrow[r] & \Weil(C \setminus \widehat{N}, \overline{\eta}) \arrow[r] & \Weil^{t}(C \setminus N', \overline{\eta}) \arrow[r] & 1.
\end{tikzcd}
\]
The above theorems imply the following corollary.

\begin{corollary}
The action of $\Weil(C \setminus \widehat{N}, \overline{\eta})^I$ on $H_{I, W}$ factors through $\Weil^{t}(X \setminus N', \overline{\eta})^I$, and tame inertia subgroups along $P'$ act unipotently.
\end{corollary}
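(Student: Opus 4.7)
The plan is to bootstrap from the two preceding corollaries, which address inertia at a single place acting through a single leg, to the product statement for the full group $\Weil(C \setminus \widehat{N}, \overline{\eta})^I$ acting on $H_{I,W}$. Recall that $H_{I,W}$ is essentially the stalk of $R\mathfrak{p}_! \mathscr{F}_{I,W}$ at a geometric generic point of $(C \setminus \widehat{N})^I$, which by the ind-constancy and Drinfeld-lemma argument of Section~\ref{section:excursion} inherits an action of the product Weil group.

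Fix a leg $i \in I$ and a place $p' \in P'$, and view $I_{K'_{p'}}$ as a subgroup of the $i$-th factor of the product Weil group. To isolate the action of $I_{K'_{p'}}$, I would specialize only the $i$-th coordinate from $\overline{\eta}$ to a geometric point $s$ above $p'$, leaving the other coordinates generic, and pass to the nearby cycles functor $\Psi_i$ in that coordinate. Theorem~\ref{thm:commutingnearbycycles} then supplies the Galois-equivariant isomorphism
\begin{equation}
R\mathfrak{p}_! \Psi_i \mathscr{F}_{I,W} \cong \Psi_i R\mathfrak{p}_! \mathscr{F}_{I,W},
\end{equation}
so the action of $I_{K'_{p'}}$ on the relevant stalk of $H_{I,W}$ is computed on $R\mathfrak{p}_! \Psi_i \mathscr{F}_{I,W}$. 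By the local models theorem, this action is read off from the action on nearby cycles of the Satake sheaf $\mathcal{S}_{\{i\}, W_i}$ on the Beilinson--Drinfeld Grassmannian, where the two preceding corollaries (Gaitsgory--Zhu in the split parahoric case, Zhu--Richarz after base change along $C' \to C$) show that $I^1_{K'_{p'}}$ acts trivially and the tame quotient acts unipotently.

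Running this argument independently over all pairs $(i, p') \in I \times P'$ kills every wild inertia factor in every coordinate, so the joint action factors through $\Weil^t(C \setminus N', \overline{\eta})^I$ and tame inertia along $P'$ acts unipotently in each factor. The main subtlety I expect is verifying that the single-leg nearby cycles functor really does recover the action of the $i$-th factor of the product Weil group on the full generic stalk of $H_{I,W}$; this relies on the mutual compatibility of iterated nearby cycles with partial Frobenius maps and with each other that was established in Sections~\ref{section:excursion} and~\ref{sec:centralsheaves}, together with proper base change to identify the stalks appearing along the various specializations.
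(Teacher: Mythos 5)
Your proposal is correct and follows the paper's intended route: the paper derives this corollary immediately from the two preceding single-leg corollaries, whose proofs use exactly the mechanism you describe (nearby cycles along one coordinate, Galois-equivariance of the canonical map $R\mathfrak{p}_!\Psi \to \Psi R\mathfrak{p}_!$ which is an isomorphism by Theorem~\ref{thm:commutingnearbycycles}, and the local models theorem to reduce to unipotence of nearby cycles of Satake sheaves on the affine Grassmannian). Running that argument over all pairs of a leg $i \in I$ and a place $p' \in P'$ is precisely the content of the corollary, so no further comment is needed.
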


We now conclude with some consequences about the global Langlands correspondence.  Suppose $f$ is a cuspidal automorphic form associated to parahoric level structure at a point $p \in P \subset C$, and let $\rho \colon \Weil^{t}(C \setminus N') \to {}^{L}G$ be the associated Langlands parameter.  We note that this Langlands parameter restricts to a representation $\Weil^{t}(C' \setminus N') \to \widehat{G}$ and the tame inertia $I^t_{{K'}_{p'}}$ is a subset of the latter.  We would like to draw some conclusions about the image of tame generator of the local Galois group.

Let us briefly review some necessary facts about Langlands parameters that we will use.  Consider cuspidal automorphic forms for $G$ with finite order central character corresponding to the lattice $\Xi$ which is a fixed vector under an open compact subgroup corresponding to the level structure we have constructed via dilatation can be viewed as a function on $\Bun_G(\mathbb{F}_q) / \Xi$.  The corresponding Langlands parameters of cuspidal automorphic forms are characters of the algebra of excursion operators on $H_{\{ 0 \}, 1}^{\mathrm{cusp}}$.  Any such character $\nu$ will give a semisimple Galois representation $\rho$.  The Galois representation is compatible with the character of the excursion algebra in the sense that if $I$ is a finite set, $f \in \mathcal{O}(\widehat{G} \backslash \widehat{G}^I / \widehat{G})$ is a function and $(\gamma_i)_{i \in I}$ is a tuple of Galois elements, then the value of the character at the corresponding excursion operator $S_{I, f, \gamma}$ is equal to the value of $f$ at the corresponding tuple of $\widehat{G}^I$ under the Langlands parameter.  That is, \cite[Proposition~5.7]{lafforgue2014introduction}

\[ \nu(S_{I, f, (\gamma_i)_{i \in I}}) = f((\rho(\gamma_i))_{i \in I}). \]

For unipotence of the tame generator, we only need the case $I = \{ 1,2 \}$ and $(\gamma_1, \gamma_2) = (\gamma, 1)$ for $\gamma$ the tame generator.

\begin{theorem}\label{thm:unipotent}
Let $\rho : \Weil^t(C \setminus N', \overline{\eta}) \to {}^{L}G(\overline{\mathbb{Q}_\ell})$ be the Langlands parameter associated to a cuspidal automorphic form $f$ for $G(\mathbb{A}_K)$ with finite order central character and associated with a parahoric reduction by $P$ at a point $p \in C$.  Let $K'$ be the extension over which $G$ splits, $C'$ the normalization of $C$, and $P'$ the preimage in $C'$ of places in $C$ at which $G$ has parahoric reduction.

Then the image of the tame generator $\gamma \in I^t_{{K'}_{p'}} \subset \Weil^{t, P'}(C \setminus N, \overline{\eta}) \to {}^{L}G(\overline{\mathbb{Q}_{\ell}})$ is unipotent.
\end{theorem}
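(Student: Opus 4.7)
The plan is to combine V.~Lafforgue's excursion formula with the unipotence of tame inertia on the cohomology of shtukas established in the corollaries earlier in this section. Specifically, for $I = \{1, 2\}$, for any function $f \in \mathcal{O}(\widehat{G} \backslash \widehat{G}^2 / \widehat{G})$, and for $(\gamma_1, \gamma_2) = (\gamma^n, 1)$ where $\gamma$ is a topological generator of $I^t_{K'_{p'}}$, the formula
\[
\nu(S_{\{1,2\}, f, (\gamma^n, 1)}) = f(\rho(\gamma)^n, 1)
\]
holds. Taking $f(g_1, g_2) = \mathrm{tr}_V(g_1 g_2^{-1})$ for an arbitrary representation $V$ of $\widehat{G}$, the right-hand side becomes $\mathrm{tr}_V(\rho(\gamma)^n) = \sum_i \lambda_i^n$, where $\lambda_1, \dots, \lambda_{\dim V}$ are the eigenvalues of $\rho(\gamma)$ acting on $V$, listed with multiplicity.

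The next step is to observe that the left-hand side is polynomial in $n$. Indeed, the excursion operator $S_{\{1,2\}, f, (\gamma^n, 1)}$ factors through the action of $\gamma^n$ on $H_{\{1,2\}, V \boxtimes V^*}|_{v \times v}$ (with the trivial action on the second leg), sandwiched between creation and annihilation. By the corollaries above, $\gamma$ acts unipotently on this cohomology, so $\gamma = 1 + N$ for some nilpotent operator $N$ with $N^k = 0$. Therefore $\gamma^n = \sum_{j=0}^{k-1} \binom{n}{j} N^j$, which depends polynomially on $n$, and hence so does the scalar $\nu(S_{\{1,2\}, f, (\gamma^n, 1)})$.

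Combining the two, $n \mapsto \sum_i \lambda_i^n$ is a polynomial function on $\mathbb{Z}_{\geq 0}$. By linear independence of distinct characters of $\mathbb{Z}$ (equivalently, by spectral theory of the shift operator), an exponential sum $\sum_j m_j \mu_j^n$ with distinct $\mu_j$ is polynomial in $n$ only when $\mu_j = 1$ for each $j$ with $m_j \neq 0$. Thus $\lambda_i = 1$ for every $i$, so $\rho(\gamma)$ acts unipotently on $V$. Since $V$ was arbitrary, $\rho(\gamma) \in \widehat{G}(\overline{\mathbb{Q}_\ell})$ is unipotent.

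The substantive content of the argument is already packaged into the unipotence of tame inertia on cohomology, which rests on Theorem~\ref{thm:commutingnearbycycles} together with the central sheaf computation of Gaitsgory and Zhu; the final deduction via excursion is essentially formal. The only point requiring care is the explicit identification of the Weil group action appearing in the excursion operator with the tame inertia action on $H_{\{1,2\}, V \boxtimes V^*}$ to which the unipotence conclusion applies, which follows from the standard construction of excursion operators together with Galois equivariance of the canonical map $R\mathfrak{p}_! \Psi \to \Psi R\mathfrak{p}_!$.
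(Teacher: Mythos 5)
Your argument is correct and rests on exactly the same pillars as the paper's proof: unipotence of the tame inertia action on $R\mathfrak{p}_!\mathscr{F}_{I,W}$ (via Theorem~\ref{thm:commutingnearbycycles} and the central-sheaf results), Lafforgue's compatibility $\nu(S_{I,f,(\gamma_i)_{i\in I}}) = f((\rho(\gamma_i))_{i\in I})$, and the fact that unipotence of an element of $\widehat{G}$ is detected by its image in $\widehat{G} /\!/ \widehat{G}$, i.e.\ by the values of all characters $\chi_V$. You differ only in how the final formal step is executed. The paper asserts that unipotence of $\gamma$ on cohomology makes the excursion operator $S_{\{1,2\},V,(\gamma,1)}$ itself act unipotently, so that every character of the excursion algebra sends it to the value it takes at the identity; you instead vary the exponent, observing that $\gamma^n=(1+N)^n$ with $N$ nilpotent makes $n\mapsto \nu(S_{\{1,2\},f,(\gamma^n,1)})=\mathrm{tr}_V(\rho(\gamma)^n)$ polynomial in $n$, and then invoke linear independence of characters to force every eigenvalue of $\rho(\gamma)$ on $V$ to equal $1$. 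Your variant sidesteps the one point the paper leaves implicit --- that sandwiching the unipotent operator $(\gamma,1)$ between the non-invertible creation and annihilation maps still produces an operator with only trivial excursion eigenvalues --- at the small cost of needing the (routine) remark that the nilpotency order of $N$ is uniform in the Harder-Narasimhan truncation $\mu$, being bounded in terms of $V$ via the local model, so that polynomiality survives passage to the ind-constructible limit. Both routes are purely formal once unipotence on cohomology is in hand.
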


\begin{proof}
We can justify unipotence on the level of Galois representations up to conjugation, and we only need to use the fact that nearby cycles acts unipotently.  The image of the generator of tame ramification is unipotent if and only if its image in $\widehat{G} // \widehat{G}$ is the identity.  On the other hand, the unipotence of the action of $\gamma$ on cohomology implies that the excursion operator $\mathcal{S}_{\{ 1,2 \}, V, (\gamma,1)}$ acts unipotently for every $V$.  So for any character of the excursion algebra, $\mathcal{S}_{\{ 1,2 \}, V, (\gamma,1)} - 1$ is sent to $0$.  Running over all matrix coefficients up to conjugation, the image of $\gamma$ is $1 \in \widehat{G} // \widehat{G}$.
\end{proof}

At parahoric levels, we would like to know which unipotent conjugacy class contains the image of the tame generator.  This computation of monodromy is the analogue for shtukas of the computation of monodromy for automorphic sheaves in \cite[Section~4]{yun2016epipelagic}, see also \cite[Proposition~4.2.4]{yun2014motives}.

For the formulation of our next theorem, we introduce some notation.  For notational simplicity, we will assume that the generic fiber of $G$ splits.  Fix a pinning of $G$ and a fixed Iwahori $I$ in the loop group of the split group associated to $G$ which reduces modulo the uniformizer to a standard Borel $B$ over the base field.  A standard parahoric is a parahoric $P$ containing $I$.  For such a parahoric, we can form $P^+$, the pro-unipotent radical of $P$, and the corresponding Levi $M_P = P / P^+$, which is a reductive group.
Let $w_P$ be the longest element of the Weyl group of $M_P$.  For example, for the Iwahori $I$, $w_I$ is the identity.  We can view all the $w_P$ for different $P$ as elements of the affine Weyl group $\tilde{W}$.  Moreover, these are each contained in a unique two-sided cell, which we denote $c_P$.  In fact, they are Duflo involutions in the two-sided cells.  We now recall Lusztig's order-preserving bijection \cite{lusztig1989cells}
\[ \{ \text{two-sided cells in $\tilde{W}$} \} \simeq \{ \text{unipotent conjugacy classes in $\widehat{G}$} \}. \]

Let $c_P$ be the two-sided cell, and let $u_P$ the unipotent orbit corresponding to the two-sided cell $c_P$ in the Langlands dual group.  Let $\overline{u_P}$ be the closure of the unipotent orbit $u_P$ in $\widehat{G}$.  In general this is the closure of the unipotent orbit containing a dense open subset of the unipotent radical of the parabolic $\widehat{P} \subset \widehat{G}$.  We note that the two-sided cell containing the identity is the largest with respect to the order on the left hand side, and the orbit closure $\overline{u_I}$ is the unipotent cone $\mathcal{N}$ of the Langlands dual group.

We recall the main result of Bezrukavnikov \cite[Theorems 1 and 2]{bezrukavnikov2004tensor} in a format that is easy for us to use.  For any $w \in \tilde{W}$, let $L_w$ be the corresponding simple object in the Iwahori-Hecke category $\Perv(I \backslash LG / I)$.  To a two-sided cell $c$, Bezrukavnikov associates a tensor category $\mathcal{A}_c$ with tensor structure $\bullet$ given by truncated convolution.  For a Duflo involution $d \in c$, $L_d$ is idempotent under truncated convolution in $\mathcal{A}_c$.  Let $\mathcal{A}_d$ be the category generated by subquotients of $L_d \bullet Z(V)$ where $Z \colon \Rep(\widehat{G}) \to \Perv(I \backslash LG / I)$ is Gaitsgory's central functor into the Iwahori-Hecke category.  This gives us a tensor functor $\Rep(\widehat{G}) \to \mathcal{A}_d$ which is in fact a central functor.  This central functor inherits an automorphism from monodromy, and viewing this as a fiber functor and remembering only the monodromy automorphism of the fiber functor gives an element of $\widehat{G}$.

The result of Bezrukavnikov now implies that the conjugacy class of $\widehat{G}$ under Tannakian formalism is a unipotent conjugacy class corresponding to the two-sided cell $c$ containing the Duflo involution $d$.

Specializing to the case $d = w_P$, we have a smooth map of the Iwahori affine flag variety to the parahoric affine flag variety which intertwines the central sheaf functors and their monodromy.  That is, pulling back along this map sends central sheaves $\Psi(\mathcal{S}_V)$ for the parahoric to truncated convolutions $L_{w_P} \bullet Z(V)$ where $Z(V)$ is Gaitsgory's central sheaf functor for the Iwahori, since $L_{w_P}$ is just the constant sheaf in $\Perv(I \backslash P / I)$ for parahoric $P$ containing $I$.

\begin{theorem}\label{thm:twosided}
For a cuspidal automorphic form for a split group $G$ over $K$ with fixed vector under an open compact subgroup of $G(\mathbb{A}_K)$ with parahoric reduction at $p$ by a standard parahoric $P$, the image of the tame generator $\gamma \in I^t_{{K}_{p}} \subset \Weil^{t}(C \setminus N', \overline{\eta}) \to \widehat{G}(\overline{\mathbb{Q}_{\ell}})$ lives in the unipotent orbit closure $\overline{u_P}$ inside the Langlands dual group.
\end{theorem}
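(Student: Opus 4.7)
The plan is to use Theorem~\ref{thm:commutingnearbycycles} to transport the computation of the monodromy of the tame generator from the cohomology of shtukas to the monodromy on the stalks of nearby cycles on the parahoric affine flag variety, where Bezrukavnikov's theorem on two-sided cells directly applies. Concretely, I would start with an excursion operator $S_{\{1,2\}, V, (\gamma, 1)}$ acting on $H_{\{0\}, \mathbf{1}}^{\mathrm{cusp}}$ and, by the standard unwinding, realize it on a summand of $R\mathfrak{p}_! \mathscr{F}_{\{1,2\}, V \boxtimes V^*}$ localized near $(p, p)$. I would then fuse the legs, pick coordinates so that leg $2$ is specialized to $p$ first, and use Theorem~\ref{thm:commutingnearbycycles} to pass the nearby cycles functor $\Psi_p$ through $R\mathfrak{p}_!$, reducing the statement to one about the $\gamma$-action on $\Psi_p \mathscr{F}_{\{1\}, V}$ on a fiber of the stack of shtukas with a single leg at $p$.

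Next I would apply the local models theorem together with Lemma~\ref{monoidalsheaves} to identify $\Psi_p \mathscr{F}_{\{1\}, V}$, up to smooth pullback, with Gaitsgory's central sheaf $Z_V$ on the parahoric affine flag variety $\mathrm{Fl}_P$. At this point the computation becomes purely local and group-theoretic. Using the smooth projection from the Iwahori flag variety to $\mathrm{Fl}_P$, $Z_V$ pulls back to the truncated convolution $L_{w_P} \bullet Z(V)$ in Bezrukavnikov's category $\mathcal{A}_{w_P}$, where $w_P$ is the longest element of the Weyl group of the Levi of $P$. Bezrukavnikov's theorem then identifies the monodromy of this central functor, viewed through Tannakian formalism on $\Rep(\widehat{G})$, with an element of $\widehat{G}$ lying in the unipotent orbit $u_P$ associated with the two-sided cell containing $w_P$.

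To convert this monodromy statement into a statement about $\rho(\gamma)$, I would use Lafforgue's compatibility formula $\nu(S_{I, f, (\gamma_i)}) = f((\rho(\gamma_i)))$ applied to $I = \{1,2\}$, $(\gamma_1, \gamma_2) = (\gamma, 1)$, and $f$ running over the matrix coefficients of representations $V$ of $\widehat{G}$, which generate $\mathcal{O}(\widehat{G} /\!/ \widehat{G})$. The previous two paragraphs show that $S_{\{1,2\}, V, (\gamma, 1)}$, viewed as an endomorphism of the cuspidal cohomology, factors through the monodromy of the central sheaf and hence its eigenvalues on any constituent are the values of the corresponding matrix coefficient at a unipotent element of $\overline{u_P}$. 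Consequently every function $f \in \mathcal{O}(\widehat{G} /\!/ \widehat{G})$ vanishing on $\overline{u_P}$ is sent to $0$ by $\nu$, so $\rho(\gamma)$ lies in $\overline{u_P}$.

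The main obstacle I expect is the bookkeeping in the second step: making sure that the isomorphism produced by the local models theorem is compatible with the Weil-group (in particular inertia) action on both sides, so that the monodromy identified by Bezrukavnikov really is the monodromy by which the tame generator acts on the stalk of $\Psi_p \mathscr{F}_{\{1\},V}$, and that this matches the excursion-operator action through the fusion isomorphism used in Lemma~\ref{lem:monoidalJ}. In the course of this I would need to verify that the Tannakian reconstruction of the monodromy in $\widehat{G}$ (rather than merely in $\widehat{G}/\!/\widehat{G}$) transports correctly; but since only the conjugacy class is needed for the conclusion, it should suffice to check equivariance of the identifications up to inner automorphisms.
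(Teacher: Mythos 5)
Your first two steps (transporting the monodromy computation to the local model via Theorem~\ref{thm:commutingnearbycycles} and identifying the nearby cycles with $L_{w_P}\bullet Z(V)$, whose monodromy Bezrukavnikov computes) match the paper's strategy. But the final step, where you convert the local monodromy statement into a constraint on $\rho(\gamma)$, has a genuine gap. The ordinary excursion operators $S_{\{1,2\},f,(\gamma,1)}$ only give you access to $f\in\mathcal{O}(\widehat{G}\backslash\widehat{G}^{\{1,2\}}/\widehat{G})$, which upon evaluation at $(\gamma,1)$ become \emph{conjugation-invariant} functions of $\rho(\gamma)$, i.e.\ elements of $\mathcal{O}(\widehat{G}/\!/\widehat{G})$. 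These cannot distinguish unipotent conjugacy classes: the entire unipotent cone maps to a single point of $\widehat{G}/\!/\widehat{G}$, so every class function vanishing on $\overline{u_P}$ vanishes on all of $\mathcal{N}$, and conversely a class function cannot cut out $\overline{u_P}\subsetneq\mathcal{N}$. Your argument as written therefore only recovers Theorem~\ref{thm:unipotent} (unipotence of $\rho(\gamma)$), not membership in the specific orbit closure $\overline{u_P}$.

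The missing ingredient is the \emph{framed} excursion algebra of Lafforgue--Zhu, which is what the paper uses. Working with $H^{\mathrm{cusp}}_{\{0\},\Reg}$ (cohomology with coefficients in the regular representation), one obtains operators $F_{f,\gamma}$ attached to \emph{arbitrary} regular functions $f\in\mathcal{O}(\widehat{G})$, in particular to generators of the ideal $\mathscr{J}$ of $\overline{u_P}$. The paper realizes such an $f$ as a matrix coefficient $\xi\circ(\cdot)\circ x$ on $\underline{V}$ and introduces the non-equivariant operator $F(h\otimes v)(g)=h(g)\,g^{-1}N_Pg\cdot v$ for a fixed $N_P\in u_P$; the key lemma identifies $F$, through the fusion isomorphism $\theta\colon\Reg\otimes\underline{V}\to\Reg\otimes V$ and Bezrukavnikov's equivalence $\mathcal{A}_{w_P}\simeq\Rep(H_{w_P})$, with the monodromy endomorphism $1\bullet\mathfrak{M}_V$ of the central sheaf. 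Then $\xi\circ F\circ x=0$ because $f(g^{-1}N_Pg)=0$ for $f\in\mathscr{J}$, and transporting this vanishing through the local model and Theorem~\ref{thm:commutingnearbycycles} yields $F_{f,\gamma}=0$ in the framed excursion algebra, which cuts out exactly the preimage of $\overline{u_P}$ in the framed parameter space. Without this framed enhancement (or some equivalent device for converting non-invariant functions on $\widehat{G}$ into operators on cohomology), the conclusion about $\overline{u_P}$ does not follow.
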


\begin{proof}
To prove the claim on two-sided cells, we will freely use the framed excursion algebra and related ideas from the paper of Lafforgue and Zhu \cite{lafforgue2018d}.  Consider the spectrum of the framed excursion algebra as an affine space of representations of $\Weil^{t}(C \setminus N, \overline{\eta})$, over which
\begin{equation} H^{\mathrm{cusp}}_{\{ 0 \}, \Reg} = \bigoplus_{V \in \mathrm{Rep}(\widehat{G})} H^{\mathrm{cusp}}_{\{0\}, V} \otimes V \end{equation}
is a module.  Choosing a generator $\gamma$ of tame ramification at $p$ gives a map of the spectrum of the space of representations to $\widehat{G}$.  This map is a restriction from all excursion operators to those involving just integer powers of the tame generator $\gamma$, including $1$.  We want to argue that the support of $H^{\mathrm{cusp}}_{\{ 0 \}, \Reg}$ on the excursion algebra is contained in the preimage of the unipotent orbit closure $\overline{u_P}$.  The Langlands parameters in V. Lafforgue's construction are the generalized eigenspaces given by the excursion algebra, and any character corresponds to a representation $\rho$ which must send the tame generator $\gamma$ to an element of $\overline{u_P}$.

Consider the ring of regular functions with $\overline{\mathbb{Q}_{\ell}}$ coefficients on $\widehat{G}$.  Let $\mathscr{J}$ be the ideal in this ring that defines the unipotent orbit closure $\overline{u_P}$.  It is generated by relations involving matrix coefficients that all elements in $\overline{u_P}$ satisfy.  Let $f$ be such an element of this ideal, that is, a polynomial vanishing on the Zariski closure of the unipotent orbit.  We first show that $f$ generates a relation that the monodromy of nearby cycles over $\Gr_G$ satisfies.  Then we use the local model to transport this relation to a relation satisfied by nearby cycles on $\Cht_G$.  Then, these relations will produce an ideal that exactly cuts out $\overline{u_P}$ in $\widehat{G}$ over which the framed moduli of Langlands parameters lives.

For any fixed regular function $f$ on $\widehat{G}$, by the algebraic Peter-Weyl theorem, $f$ can be written as a matrix coefficient $g \mapsto \langle v^*, g v \rangle$ for some representation $V$ (generally not irreducible) and vectors $v^* \in V^*, v \in V$.  For $V$ a $\widehat{G}$-representation, let $\underline{V}$ denote the underlying vector space with trivial $\widehat{G}$ action.  Following Lafforgue and Zhu we have a map of $\widehat{G}$ representations
\begin{equation} \theta \colon \Reg \otimes \underline{V} \to \Reg \otimes V \end{equation}
defined by $\theta(f \otimes v)(g) = f(g) g \cdot v$.

For $N_P \in u_P$ fixed, consider the map $F \colon \Reg \otimes \underline{V} \to \Reg \otimes \underline{V}$ of vector spaces (in fact it is a map in $\Rep(H_{w_P})$ in Bezrukavnikov's notation where $w_P$ is considered as a Duflo involution) defined by
\begin{equation} F(f \otimes v)(g) = f(g) g^{-1} N_P g \cdot v. \end{equation}
We note that $F$ implicitly depends on our choice of $N_P$.  This defines an endomorphism of $\Psi \mathcal{S}_{\Reg \otimes \underline{V}} \cong \Psi \mathcal{S}_{\Reg} \otimes \underline{V}$ considered as a map in the truncated convolution category.  Moreover, if $f \in \mathscr{J}$ represented by matrix coefficients pairing maps $x \colon 1 \to \underline{V}$ with $\xi \colon \underline{V} \to 1$, we know that the composition
\begin{equation}
\begin{tikzcd}
\Psi \mathcal{S}_{\{ 0 \}, \Reg} \arrow[r, "x"] & \Psi \mathcal{S}_{\{ 0 \}, \Reg \otimes \underline{V}} \arrow[r, "F"] & \Psi \mathcal{S}_{\{ 0 \}, \Reg \otimes \underline{V}} \arrow[r, "\xi"] & \Psi \mathcal{S}_{\{ 0 \}, \Reg}
\end{tikzcd}
\end{equation}
is $0$.  We note that there is a functor between equivariant sheaves on the parahoric affine flag variety to equivariant sheaves on the Iwahori affine flag variety sending $\Psi \mathcal{S}_{\{ 0 \}, V}$ to the object $L_{w_P} \bullet Z(V)$.  This functor restricts to a central functor from central sheaves on the parahoric affine flag variety to the truncated convolution category $\mathcal{A}_{w_P}$.  We also use $F$ to denote the map
\[ L_{w_P} \bullet Z(\Reg) \bullet Z(\underline{V}) \to L_{w_P} \bullet Z(\Reg) \bullet Z(\underline{V}) \]
under the equivalence of categories given by \cite[Theorem~1]{bezrukavnikov2004tensor}.

\begin{lemma}
For suitable $N_P \in u_P$, we have a commutative square in the truncated convolution category $\mathcal{A}_{w_P}$
\begin{equation}
    \begin{tikzcd}
    L_{w_P} \bullet Z(\Reg) \bullet Z(\underline{V}) \arrow[r, "F"] \arrow[d, "\theta"] & L_{w_P} \bullet Z(\Reg) \bullet Z(\underline{V}) \arrow[d, "\theta"] \\
    L_{w_P} \bullet Z(\Reg) \bullet Z(V) \arrow[r, "1 \bullet \mathfrak{M}_V"] & L_{w_P} \bullet Z(\Reg) \bullet Z(V)
    \end{tikzcd}
\end{equation}
where the vertical arrows are isomorphisms and the bottom row is the unipotent monodromy endomorphism coming from unipotent nearby cycles.
\end{lemma}

\begin{proof}
\[ \theta \circ F(f \otimes v)(g) = \theta(f \otimes g^{-1} N_P g v)(g) = f(g) N_P g \cdot v. \]
Under the identification $\mathcal{A}_{w_P} \cong \Rep(H_{w_P})$, the fact that the right hand side identifies with the composition of $1 * \mathfrak{M}_V \circ \theta$ follows by Theorem~1 of \cite{bezrukavnikov2004tensor}.
\end{proof}

We return to the proof of Theorem~\ref{thm:twosided}.  Using the above lemma, we build a diagram
\begin{equation}\label{eq:satakeexcursion}
    \begin{tikzcd}
    \Psi \mathcal{S}_{\{ 0 \}, \Reg} \arrow[r, "x"] & \Psi \mathcal{S}_{\{ 0 \}, \Reg \otimes \underline{V}} \arrow[d, "\theta"] \arrow[r, "F"] & \Psi \mathcal{S}_{\{ 0 \}, \Reg \otimes \underline{V}} \arrow[d, "\theta"] \arrow[r, "\xi"] & \mathcal{S}_{\{ 0 \}, \Reg} \\
    & \Delta^* \Psi_1 \Psi_0 \mathcal{S}_{\{ 0,1 \}, \Reg \boxtimes V} \arrow[r] & \Delta^* \Psi_1 \Psi_0 \mathcal{S}_{\{ 0,1 \}, \Reg \boxtimes V}
    \end{tikzcd}
\end{equation}
where the vertical arrows come from fusion of nearby cycles, the first and last arrow come from realizing the regular function $f$ as a matrix coefficient, and the lower horizontal arrow is the action of monodromy on the leg $1$.  The composition of the top row is $0$.

Such a relation is clearly preserved under pulling back under a smooth map, and we also know that pulling back \'{e}tale sheaves under a surjective map is a surjective functor.  Thus, we conclude that the composition
\begin{equation}
    \begin{tikzcd}
    \Psi \mathscr{F}_{\{ 0 \}, \Reg} \arrow[r, "x"] & \Psi \mathscr{F}_{\{ 0 \}, \Reg \otimes \underline{V}} \arrow[d] & \Psi \mathscr{F}_{\{ 0 \}, \Reg \otimes \underline{V}} \arrow[d] \arrow[r, "\xi"] & \mathscr{F}_{\{ 0 \}, \Reg} \\
    & \Delta^* \Psi_1 \Psi_0 \mathscr{F}_{\{ 0,1 \}, \Reg \boxtimes V} \arrow[r] & \Delta^* \Psi_1 \Psi_0 \mathscr{F}_{\{ 0,1 \}, \Reg \boxtimes V}
    \end{tikzcd}
\end{equation}
is $0$.  After pushing forward by $\mathfrak{p}_!$ and using Theorem~\ref{thm:commutingnearbycycles}, the composition becomes an element $F_{f, \gamma}$ in the framed excursion algebra, using notation in \cite{lafforgue2018d}.  Thus, we get relations in the framed excursion algebra
\[ F_{f, \gamma} = 0 \]
for every element $f$ of the ideal $\mathscr{J}$, which shows that the image of the tame generator is contained in the orbit closure $\overline{u_P}$.
\end{proof}

\begin{remark}
Since excursion operators can be constructed for the whole $H_{I, V}$ and not just the cuspidal part, and in the case of $G$ split, this cohomology is known to be finitely generated over the Hecke algebra \cite{xue2018finiteness}, the restriction to cuspidal automorphic forms in the above theorem can be relaxed.
\end{remark}

\printbibliography

\end{document}